\newtheorem{theorem}{Theorem}[section]
\newtheorem{proposition}[theorem]{Proposition}
\newtheorem{lemma}[theorem]{Lemma}
\theoremstyle{remark}
\newtheorem{remark}[theorem]{Remark}
\DeclareMathOperator{\iRe}{Re}
\newcommand\conclremark{\leavevmode\unskip\penalty9999\hbox{}\nobreak\hfill\quad\hbox{$\diamondsuit$}}
\begin{document}

\title[Probabilistic NLW on $B_2\times\mathbb{T}$]{Probabilistic well-posedness for the nonlinear wave equation on $B_2\times\mathbb{T}$}
\author[A. Bulut]{Aynur Bulut}
\address{Department of Mathematics, Louisiana State University, 303 Lockett Hall, Baton Rouge, LA, 70803--4918}
\email{aynurbulut@lsu.edu}
\thanks{The author was partially supported by NSF Grant DMS-1361838/DMS-1748083.}

\begin{abstract}
We establish probabilistic well-posedness results for the subcubic nonlinear wave equation, posed on the domain $B_2\times\mathbb{T}$, with randomly chosen initial data having radial symmetry in the $B_2$ variable, and with vanishing Dirichlet boundary conditions on $\partial B_2\times\mathbb{T}$.  
\end{abstract}

\maketitle

\setlength{\parskip}{0.5em}

\section{Introduction}

In this paper, we consider probablistic well-posedness results for the nonlinear wave (NLW) equation on the periodic cylinder $B_2\times \mathbb{T}$, where $B_2\subset\mathbb{R}^2$ denotes the planar unit ball $\{|x'|<1\}$ and where $\mathbb{T}=\mathbb{R}/\mathbb{Z}$ denotes the torus, with radiality of the initial data (and thus of the solution) imposed on the $B_2$ variable.

Our interest in this question is motivated by recent results in the probablilistic well-posedness theory for NLW on general manifolds (for an overview of the classical deterministic well-posedness theory for NLW, see, for instance, \cite{SS}).  In the probabilistic setting, motivated by applications to mathematical physics, study of local and global well-posedness properties of the nonlinear wave and nonlinear Schr\"odinger (NLS) equations, as well as related equations of dispersive type, began with the foundational works \cite{B1,B2,B3} of Bourgain treating NLS on the one, two, and three dimensional torus (see also the work of Lebowitz, Rose and Speer \cite{LRS}).  

Following these works, a large number of authors have studied related issues.  For a comprehensive overview of recent approaches to such results, we refer to the survey article \cite{BenyiOhPocovnicu2019} (see also the earlier survey \cite{B-icm}); in what follows, we point out several particular works which have relevance for our present study.  In \cite{CollianderOh}, Colliander and Oh studied NLS on $\mathbb{T}$ at low regularity, and made use of a probabilistic local theory as part of their arguments.  For NLW on the flat 3D torus $\mathbb{T}^3$, Burq and Tzvetkov have obtained probabilistic global well-posedness results \cite{BT-pwp}, using a variety of energy-based considerations; further results in this direction are due to Pocovnicu \cite{P}, Oh-Pocovnicu \cite{OP}, L\"uhrmann-Mendelson \cite{LM2}, Sun-Xia \cite{SX}, and Oh-Pocovnicu-Tzvetkov \cite{OPN}; see also \cite{Bringmann,BringmannDengNahmodYue}.  We also mention \cite{NPS} for results concerning the Navier-Stokes system. 

Turning to the case of more general compact manifolds, Burq and Tzvetkov established a probabilistic local well-posedness theory for cubic NLW on arbitrary compact three-dimensional manifolds, and moreover have obtained almost sure global well-posedness for NLS and NLW \cite{T-disc,T-inv,BT1,BT2} on the 2D and 3D unit balls $B_2$ and $B_3$, making use of the invariance of the Gibbs measure.  In a recent series of works, the author and J. Bourgain have improved on these results, obtaining almost sure global well-posedness for Gibbs measure evolutions for NLW on the 3D unit ball with arbitrary energy-subcritical nonlinearity in \cite{BB4}, for NLS on the 2D unit ball with higher power nonlinearities in \cite{BB2}, and for cubic NLS on the 3D unit ball in \cite{BB3}; see also \cite{BB1} for an overview of these results.  Recent developments in the two-dimensional general manifold setting include work of Oh-Robert-Tzvetkov \cite{OhRobertTzvetkov}.

In the present work, we consider the special case of the three-dimensional manifold $B_2\times\mathbb{T}$ with radial symmetry on $B_2$, which we view as a case which interpolates between the $B_3$ and $\mathbb{T}^3$ settings (while it scales as a three-dimensional problem from a PDE point of view, the symmetry in the $B_2$ variable causes the random data treatment to be in close analogy to the spectral theory associated to two-dimensional problems).  At a technical level, this interpolation results from the difference in eigenvalue asymptotics between $B_2\times\mathbb{T}$ and $B_3$, along with the presence of $L_x^\infty$ eigenfunction estimates in the periodic variable.

We begin with some preliminary notation.  Write $x=(x',x_3)\in B_2\times\mathbb{T}$ and let $(e_n)$ be the sequence of radial eigenfunctions of $-\Delta$ on $B_2$ (with vanishing Dirichlet boundary conditions), ordered so that the associated eigenvalues $\lambda_n^2$ are increasing.  Introducing the notation
\begin{align}
z_{n,n'}:=\sqrt{\lambda_n^2+(2\pi n')^2}.\label{ref1}
\end{align}
so that $\{z_{n,n'}^2:n,n'\in\mathbb{Z}, n\geq 1\}$ are the eigenvalues of the operator $-\Delta$ on $B_2\times\mathbb{T}$ (restricted to functions which are radial in the $x'$ variable), we observe that the space of eigenfunctions for this operator is generated by the family of maps
\begin{align*}
(x',x_3)\mapsto e_n(x')e^{2\pi in'x_3},\,\, n\in\mathbb{N}, n'\in\mathbb{Z}.
\end{align*}

Let $H_x^s(B_2\times \mathbb{T})$, $s\in\mathbb{R}$, denote the space of functions $f:B_2\times\mathbb{T}\rightarrow \mathbb{C}$ with
\begin{align*}
f(x)=\sum_{n\geq 1}\sum_{n'\in\mathbb{Z}} a_{n,n'}e_n(x_1,x_2)e^{2\pi i n'x_3},
\end{align*}
for some $(a_{n,n'})_{n\geq 1,n'\in\mathbb{Z}}$ such that the associated norm
\begin{align}
\lVert f\rVert_{H_x^s}:=\bigg(\sum_{n\geq 1}\sum_{n'\in\mathbb{Z}} \langle z_{n,n'}\rangle^{2s}|a_{n,n'}|^2\bigg)^{1/2}\label{ref4}
\end{align}
is finite. 

Fix $\alpha\in\mathbb{R}$ and a pair $(F,G)\in H_x^{\alpha}\times H_x^{\alpha-1}$ of real-valued functions whose Fourier series representations are given by 
\begin{align}
(F,G)=\bigg(\sum_{n,n'} \alpha_{n,n'}e_n(x_1,x_2)e^{2\pi i n'x_3},\sum_{n,n'} \beta_{n,n'}e_n(x_1,x_2)e^{2\pi i n'x_3}\bigg)\label{eq-fg}
\end{align}
where $\alpha_{n,n'}=\overline{\alpha_{n,-n'}}$ and $\beta_{n,n'}=\overline{\beta_{n,-n'}}$ for each pair $(n,n')$ (as a consequence of the fact that $F$ and $G$ are real-valued).  Letting $(\Omega,\mathscr{M},p)$ be a given probability space, we shall establish a local well-posedness result for the nonlinear wave equation with power-type nonlinearity,
\begin{align*}
\textrm{(NLW)}\quad \left\lbrace\begin{array}{ll}w_{tt}-\Delta w+\lambda |w|^\gamma w=0,&\textrm{on}\,\, I\times (B_2\times \mathbb{T}),\\
(w,w_t)(0)=(F_\omega,G_\omega),&\textrm{on}\,\, B_2\times\mathbb{T},\\
w(t,x_3)|_{\partial B_2}=0,&t\in I, x_3\in \mathbb{T},
\end{array}\right.
\end{align*}
for $\lambda\in\{-1,1\}$, with random initial data $(F_\omega, G_\omega)$ given by 
\begin{align}
\label{ref2}F_\omega=\sum_{n,n'} h_{n,n'}(\omega)\alpha_{n,n'}e_n(x_1,x_2)e^{2\pi i n'x_3},
\end{align}
and
\begin{align}
\label{ref3}G_\omega=\sum_{n,n'} k_{n,n'}(\omega)\beta_{n,n'}e_n(x_1,x_2)e^{2\pi i n'x_3},
\end{align}
for $\omega\in \Omega$, where $(h_{n,n'})$ and $(k_{n,n'})$ are taken as sequences of independent standard complex-valued Gaussian random variables on $\Omega$ satisfying $h_{n,n'}=\overline{h_{n,-n'}}$ and $k_{n,n'}=\overline{k_{n,-n'}}$.  

The significance of this choice of randomization lies in the specification of the regularity of the randomized data.  For each given $\alpha\in\mathbb{R}$, the above randomized initial data $(F_\omega,G_\omega)$ now belongs almost surely to $H_x^\alpha(B_2\times\mathbb{T})\times H_x^{\alpha-1}(B_2\times\mathbb{T})$; moreover, for $\epsilon>0$, if $(F,G)\not\in H_x^{\alpha+\epsilon}(B_2\times \mathbb{T})\times H_x^{\alpha-1+\epsilon}(B_2\times\mathbb{T})$ then $(F_\omega,G_\omega)$ does not belong to $H_x^{\alpha+\epsilon}\times H_x^{\alpha-1+\epsilon}$ almost surely (see, e.g. Lemma $B.1$ in \cite{BT1}).

To establish local well-posedness, we use a reformulated form of the equation (NLW), arising from the substitution $u=w+i(\sqrt{-\Delta})^{-1}w_t$ (where $w$ is a solution to (NLW)), which takes the form
\begin{align}
\left\lbrace\begin{array}{rl}iu_t-\sqrt{-\Delta}u-\lambda(\sqrt{-\Delta})^{-1}\Big(|\iRe u|^\gamma\iRe u\Big)&=0,\\
u|_{t=0}&=\phi_\omega,\end{array}\right.\label{ref5}
\end{align}
with
\begin{align}
\phi_\omega=F_\omega+i(\sqrt{-\Delta})^{-1}G_\omega=\sum_{n\geq 1}\sum_{n'\in\mathbb{Z}} g_{n,n'}(\omega)\gamma_{n,n'}e_n(x_1,x_2)e^{2\pi i n'x_3},\label{ref6}
\end{align}
where $(\gamma_{n,n'})$ and $(g_{n,n'})$ are chosen so that $g_{n,n'}\gamma_{n,n'}=\alpha_{n,n'}h_{n,n'}+i\frac{\beta_{n,n'}k_{n,n'}}{z_{n,n'}}$ for all $n\geq 1$ and $n'\in\mathbb{Z}$, with $(g_{n,n'})$ a sequence of independent standard complex-valued Gaussian random variables on $(\Omega,\mathscr{M},p)$ (e.g. for each pair $(n,n')$, one can set $\gamma_{n,n'}:=|\alpha_{n,n'}|+i\frac{|\beta_{n,n'}|}{z_{n,n'}}$ and, when $\gamma_{n,n'}\neq 0$, $g_{n,n'}(\omega):=\tfrac{\alpha_{n,n'}}{\gamma_{n,n'}}h_{n,n'}(\omega)+i\tfrac{\beta_{n,n'}/z_{n,n'}}{\gamma_{n,n'}}k_{n,n'}(\omega)$, while when $\gamma_{n,n'}=0$, $g_{n,n'}(\omega):=\tfrac{1}{\sqrt{2}}h_{n,n'}(\omega)+\tfrac{1}{\sqrt{2}}k_{n,n'}(\omega)$; with these choices, the sequence $(g_{n,n'})$ is indeed a sequence of independent standard complex-valued Gaussians).

Denoting the evolution operator associated to the linear equation $iu_t-(\sqrt{-\Delta})u=0$ by
\begin{align}
S(t)\phi&=\sum_{n,n'} a_ne_n(x_1,x_2)e^{i(2\pi n'x_3-z_{n,n'}t)}\label{ref7}
\end{align}
whenever $\phi:B_2\times\mathbb{T}\rightarrow\mathbb{C}$ is given by
\begin{align}
\phi(x)=\sum_{n,n'} a_ne_n(x_1,x_2)e^{2\pi in'x_3},\label{ref8}
\end{align}
we consider solutions to ($\ref{ref5}$) in the sense of the Duhamel formula
\begin{align}
u(t)=S(t)\phi_\omega-i\lambda\int_0^t S(t-t')(\sqrt{-\Delta})^{-1}\Big[|\iRe u|^\gamma \iRe u(t')\Big]dt'.\label{ref9}
\end{align}

Our main theorem is a local well-posedness result for the initial value problem (NLW), in the form of the following theorem.  

\begin{theorem}[Local well-posedness for (NLW)]
\label{ref10}
Fix $0<\gamma<2$ and let $\alpha\in\mathbb{R}$ be such that
\begin{align}
\alpha&>0\quad\textrm{if}\quad \gamma<\sqrt{\tfrac{5}{3}},\quad\textrm{and}\label{thm1cond1}\\
\alpha&>\frac{\gamma-1}{2(\gamma+1)}\quad\textrm{if}\quad \sqrt{\tfrac{5}{3}}\leq \gamma<2.\label{thm1cond2}
\end{align}

Let $(F,G)\in H_x^\alpha(B_2\times\mathbb{T})\times H_x^{\alpha-1}(B_2\times\mathbb{T})$ be a pair of real-valued functions whose Fourier series representations are as in \eqref{eq-fg}.  For $\omega\in \Omega$, let $(F_\omega,G_\omega)$ be the randomized pair defined in \eqref{ref2}--\eqref{ref3}.  Moreover, for $(n,n')$ and $\omega\in \Omega$, let $\gamma_{n,n'}$, $g_{n,n'}(\omega)$, and $\phi_\omega$ be as defined in \eqref{ref6}.

Then for every $A\geq 1$ there exists a set $\Sigma_A\subset\Omega$ with $p(\Sigma_A)\leq C_1\exp(-C_2A^{2})$ such 
that for all $\omega\in\Omega\setminus\Sigma_A$ the initial value problem \eqref{ref5} has a unique solution $u$ on the 
interval $I=[0,A^{-c}]$, with 
\begin{align}
u-S(t)\phi_\omega\in X^{s,b}(I)\label{ref12}
\end{align}
for some $s\in [\frac{1}{2},1]$ (depending on $\gamma$) and $b>\frac{1}{2}$.
\end{theorem}

The proof of Theorem $\ref{ref10}$ is based on a fixed-point analysis in $X^{s,b}$ spaces; see Section $2$ for the definition of these spaces in our context.  We outline three key ingredients in the argument, each of which makes essential use of the product structure of the domain:
\begin{itemize}
\item a class of local-in-time Strichartz estimates adapted to our $B_2\times\mathbb{T}$ setting, making use of eigenfunction expansions of the type given in ($\ref{ref8}$); see Proposition $\ref{ref21}$ --- it is important to note that the present setting is made delicate by the presence of ``essentially repeating'' eigenvalues (that is, $(n_1,n'_1)\neq (n_2,n'_2)$ with $z_{n_1,n'_1}\sim z_{n_2,n'_2}$) --- to account for this, we make use of the technique of spectral projectors (this is described further in Section $2.2$ below; see also \cite{BLP}),
\item a class of large-deviation type estimates for the linear evolution, adapted to the product structure of the domain and making full use of the eigenvalue asymptotics in our setting; see Section $3$, and
\item an estimate of the nonlinearity which allows us to control the $X^{s,b}$ norm of the nonlinear term in the Duhamel formula by a suitable $L_x^pL_t^2$ norm; see Lemma $\ref{ref44}$.
\end{itemize}

Recall that initial data of the form ($\ref{ref6}$) is, almost surely in $\omega$, in the regularity class $H_x^{\alpha}$.  When $\alpha$ and $\gamma$ satisfy $\alpha<\frac{3}{2}-\frac{2}{\gamma}$ the problem therefore belongs to the ill-posed (i.e. supercritical) regime (see, e.g. \cite{BT1} for a treatment of the cubic case), and probabilistic considerations are essential.  We remark that Theorem $\ref{ref10}$ does not cover the case of the cubic nonlinearity $\gamma=2$ (in this context, the restrictions on the nonlinearity given by ($\ref{thm1cond1}$)--($\ref{thm1cond2}$) correspond to $\alpha>\frac{1}{6}-$ for $\gamma=2-$).  These restrictions arise from technical aspects of our estimates (see, e.g. Lemma \ref{ref46}), and are likely not sharp.  

For simplicity of our presentation, we do not address the extension of this local solution to a global one in the present paper.  While techniques based on Gibbs measure considerations do not readily apply in our context (as the associated initial data has low regularity, and the nonlinearity would therefore need to be renormalized in order to be interpreted properly), a variety of other tools from the deterministic and probablilistic theory readily apply in our context.  This includes methods based on high-low frequency decompositions (c.f. \cite{CollianderOh} and \cite{LM}), as well as methods based on energy considerations (see, for instance \cite{BT-pwp,P,OP,LM2}).
 
We conclude this introduction by giving an outline of the rest of this paper.  In Section 2, we establish some notational conventions which will be useful in the rest of the article, including the specification of the function spaces of $X^{s,b}$ type which will be used for our local-wellposedness arguments.  We also obtain the relevant Strichartz estimates.  In Section 3 we obtain the relevant probabilistic estimates, which are bounds of large deviation type, and which rely heavily on the product structure of the domain $B_2\times\mathbb{T}$ in order to obtain the optimal amount of integrability.  Section $4$ is then devoted to estimates of the nonlinearity, which are applied in Section $5$ to obtain the proof of Theorem $\ref{ref10}$.  In Appendix A, we collect some elementary estimates which show how the condition ($\ref{thm1cond1}$)--($\ref{thm1cond2}$) enables the choice of regularity and integrability parameters used in the proof of Theorem $\ref{ref10}$.

\section{Preliminaries and function spaces}

In this section, we establish some notation and specify the main function spaces (of $X^{s,b}$ type) which will underlie our arguments in the remainder of the paper.  

We will frequently write $\langle x\rangle=(1+|x|^2)^{1/2}$, and make use of the notation $z_{n,n'}$ defined in ($\ref{ref1}$) for $n\in\mathbb{N}$ and $n'\in \mathbb{Z}$.  In this context, summations over $n$ will typically be taken over $\mathbb{N}$, while summations over $n'$ will be taken over $\mathbb{Z}$.  As usual, we will interchangably use the notations $A\lesssim B$ and $A\leq CB$, $C>0$, and the value of $C$ may change from line to line (unless otherwise indicated).  Moreover, $I\subset\mathbb{R}$ will be used to denote an arbitrary time interval $I=[0,T]$ with $|I|\leq 1$.

We will also use the notation $\mu$ to denote the probability measure induced by the Gaussian process
\begin{align*}
\omega\mapsto \phi_\omega=\sum_{n\geq 1}\sum_{n'\in\mathbb{Z}} g_{n,n'}(\omega)\gamma_{n,n'}e_n(x_1,x_2)e^{2\pi i n'x_3},
\end{align*}
i.e. the measure given by
\begin{align*}
\mu:A\mapsto p(\{\omega:\phi_\omega\in A\}).
\end{align*}

We now recall some basic properties of the eigenfunctions and eigenvalues of $-\Delta$ on $B_2$ which will play a fundamental role in our analysis below.\footnote{It is illuminating to compare ($\ref{ref16}$)--($\ref{ref17}$) with the $B_3$ estimates used in \cite{BB4,BT2}.  The difference in homogeneity, combined with the presence of $L_x^\infty$ estimates on the eigenfunctions $e^{2\pi i n'x_3}$ in the $\mathbb{T}$ variable (such estimates are indeed classically relevant; see, e.g. \cite{B-icm}), in some sense corresponds to the intuition that our results interpolate between the $B_3$ and $\mathbb{T}^3$ cases.}
Recall that $(e_n)$ and $(\lambda_n^2)$ denote the sequences of radial eigenfunctions and associated eigenvalues of $-\Delta$ on $B_2$ (with vanishing boundary conditions, and arranged so that $\lambda_n<\lambda_{n+1}$ for all $n$).  It follows from standard estimates on Bessel functions (see, e.g. \cite{T-disc}) that one has the asymptotic bound
\begin{align}
\Big|\lambda_n-\big(n\pi-\tfrac{\pi}{4}\big)\Big|\leq Cn^{-1}.\label{ref16}
\end{align}
Moreover, similar arguments 
yield the following $L^p$ estimates for the eigenfunctions $e_n$:
\begin{align}
\lVert e_n\rVert_{L_x^p(B_2)}&\lesssim \left\lbrace\begin{array}{ll}1,&1\leq p<4,\\\log(2+n)^{1/4},&p=4,\\n^{\frac{1}{2}-\frac{2}{p}},&p>4.\end{array}\right.\label{ref17}
\end{align}

\subsection{Function spaces}

We now specify the function spaces which will be used in the rest of the paper.  For each $s\in\mathbb{R}$, we shall use $H_x^s(B_2\times \mathbb{T})$ to denote the usual Sobolev space given by the norm ($\ref{ref4}$).  As described in the introduction, we shall also make use of $X^{s,b}$ spaces adapted to our context.  In particular, fixing $s\in\mathbb{R}$, $b>\frac{1}{2}$, and a time interval $I=[0,T]\subset \mathbb{R}$ with $0<T<1$, the space $X^{s,b}(I)$ will denote the space of all functions having representations
\begin{align}
f(t,x)=\sum_{m,n,n'} f_{m,n,n'}e_n(x_1,x_2)e^{2\pi i (n'x_3+mt)},\quad t\in I, x\in B_2\times\mathbb{T},\label{ref18}
\end{align}
such that the norm
\begin{align*}
\lVert f\rVert_{X^{s,b}(I)}&:=\inf_{(f_{m,n,n'})\subset\mathbb{C}}\,\, \left(\sum_{m,n,n'} \langle 2\pi m+z_{n,n'}\rangle^{2b}\langle z_{n,n'}\rangle^{2s}|f_{m,n,n'}|^2\right)^{1/2}
\end{align*}
is finite, where the infimum is taken over all sequences $(f_{m,n,n'})$ leading to the representation ($\ref{ref18}$) on $I\times (B_2\times \mathbb{T})$.

With this notation, we note that for all $s\in \mathbb{R}$, $b>\frac{1}{2}$, one has the (continuous) embedding
\begin{align}
X^{s,b}\hookrightarrow C_t(I;H_x^{s}).\label{ref19}
\end{align}
An additional embedding property is established in Section $\ref{ref43}$ as Lemma $\ref{ref46}$.

\subsection{Strichartz estimates}

We conclude this section by establishing a suitable form of the linear Strichartz estimates associated with the operator $S(t)$ defined in ($\ref{ref7}$).  

\begin{proposition}\label{ref21}
For every $p$, $q$, and $s>0$ satisfying $2\leq p,q<\infty$ and
\begin{align*}
s>\max\bigg\{1-\frac{1}{q},\frac{3}{2}-\frac{1}{q}-\frac{2}{p}\bigg\},
\end{align*}
one has
\begin{align*}
\lVert S(t)f\rVert_{L_x^pL_t^q(I)}&\lesssim \lVert f\rVert_{H_x^s}
\end{align*}
for every $f\in H_x^s(B_2\times\mathbb{T})$ and $I\subset\mathbb{R}$ with $|I|\leq 1$.
\end{proposition}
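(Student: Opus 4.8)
The plan is to reduce the mixed-norm estimate $\lVert S(t)f\rVert_{L_x^pL_t^q(I)}\lesssim\lVert f\rVert_{H_x^s}$ to a combination of: (i) the one-dimensional periodic Strichartz machinery in the $x_3$ variable, exploiting that $e^{2\pi in'x_3}$ are uniformly bounded in $L_x^\infty$; and (ii) the $L_x^p(B_2)$ eigenfunction estimates \eqref{ref17}, using spectral projection onto dyadic frequency shells to cope with the clustering of the $z_{n,n'}$ (the ``essentially repeating eigenvalues'' issue flagged in the introduction). First I would decompose $f=\sum_{N}P_Nf$, where $P_N$ projects onto the frequencies $z_{n,n'}\sim N$ (dyadic $N$), and aim to prove, for each $N$,
\begin{align*}
\lVert S(t)P_Nf\rVert_{L_x^pL_t^q(I)}\lesssim N^{\frac12-\frac1q+\max\{\frac12-\frac2p,0\}}\lVert P_Nf\rVert_{L_x^2},
\end{align*}
after which square-summing in $N$ (using $q\geq2$ so that $\ell^2_N\hookrightarrow L^q_t$-type orthogonality in the time frequency holds, together with almost-orthogonality of the shells) yields the stated bound with the $\langle z_{n,n'}\rangle^{s}$ weight, for any $s$ strictly above the exponent.

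Within a single shell, I would first handle the $x_3$-integration. Writing $S(t)P_Nf=\sum_{n'}e^{2\pi in'x_3}F_{n'}(t,x')$ with $F_{n'}(t,x')=\sum_{n:\,z_{n,n'}\sim N}a_{n,n'}e_n(x')e^{2\pi iz_{n,n'}t}$, one takes $L^q_t$ then $\ell^?_{n'}$: since $|I|\le1$ and $q\geq2$, Minkowski and the elementary $L^q_t([0,1])$ bound for a single exponential give control of $\lVert F_{n'}\rVert_{L^q_tL^2_{x'}}$-type quantities, but the genuine gain of $\tfrac12-\tfrac1q$ must come from treating the $n'$-sum as a one-dimensional periodic flow — here the key input is that the time-frequencies $z_{n,n'}$ behave, for fixed $n$, essentially like $\sqrt{\lambda_n^2+(n')^2}$, so that on the shell $z_{n,n'}\sim N$ the classical $L^4_{t,x_3}$ (Bourgain) estimate for the periodic Schrödinger/wave flow in one variable applies after linearizing the dispersion relation in $n'$. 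In parallel, the $x'$-integration is controlled by \eqref{ref17}: for $p<4$ the eigenfunctions are uniformly bounded so no loss occurs beyond the trivial $N^{1/2}$-type counting of the shell; for $p\ge4$ one pays $n^{\frac12-\frac2p}\lesssim N^{\frac12-\frac2p}$ per eigenfunction, which is exactly the $\max\{\tfrac12-\tfrac2p,0\}$ term. Interpolating/combining the $x'$ and $x_3$ gains, and using Minkowski's inequality to exchange $L^p_{x'}$ with $L^2_{x_3}$ or $L^q_t$ as needed (legitimate since $p,q\ge2$), produces the shell estimate above.

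I expect the main obstacle to be the bookkeeping forced by the repeating eigenvalues: on the shell $z_{n,n'}\sim N$ there can be many pairs $(n,n')$, and one must be careful that the spectral projector $P_N$ is used as a genuine operator (with $\lVert P_Nf\rVert_{L^2}$ controlling a single $\ell^2$ mass) rather than summing eigenfunction estimates pair-by-pair, which would lose powers of $N$. The technique, as in \cite{BLP}, is to never expand $P_N$ into individual modes when taking $L^p_{x'}$; instead one combines the $L^2$-orthogonality in $x'$ (automatic) with the $L^p_{x'}\to L^2_{x'}$ mapping properties obtained by interpolating \eqref{ref17} against the trivial $L^2$ bound, all at the level of the projected data. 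A secondary technical point is the passage from the global-in-time one-dimensional periodic Strichartz estimate to the local interval $I$: since $|I|\le1$ this is harmless (extend by a cutoff, or simply note the periodic estimate already lives on a period), but it must be stated so that the implicit constant is uniform in $I$. Once these two points are cleanly organized, square-summing over the dyadic shells $N$ and absorbing the $\epsilon$-loss into the strict inequality for $s$ completes the proof.
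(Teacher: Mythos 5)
Your proposal identifies the right obstacles (clustering/repetition of the $z_{n,n'}$ and the $L^p_{x'}$ eigenfunction bounds), but the mechanism you sketch for overcoming them is not the paper's and has a genuine gap. The paper does \emph{not} decompose into dyadic shells $z_{n,n'}\sim N$, and it does \emph{not} invoke any one-dimensional periodic Strichartz or Bourgain-type $L^4_{t,x_3}$ estimate. Instead it introduces the operator $A$ that replaces each $z_{n,n'}$ by its integer part $\lfloor z_{n,n'}\rfloor$, writes $S(t)f=S_A(t)f + i\int_0^t S_A(t-t')[(A-\sqrt{-\Delta})S(t')f]\,dt'$ (noting $A-\sqrt{-\Delta}$ is $L^\infty$-bounded on the spectrum), and reduces to the same bound for $S_A(t)$. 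For $S_A(t)$ the factor $\tfrac12-\tfrac1q$ comes from nothing more than Sobolev embedding $H_t^{1/2-1/q}\hookrightarrow L_t^q$; because the time frequencies of $S_A$ are integers, the $H^{1/2-1/q}_t$ norm splits exactly along the clusters $\{(n,n'):\lfloor z_{n,n'}\rfloor=k\}$, and Cauchy--Schwarz within each cluster introduces a factor $N(k)=\#\{(n,n'):\lfloor z_{n,n'}\rfloor=k\}$. The crucial arithmetic input is $N(k)\lesssim_\epsilon k^\epsilon$ (lattice points near a circle of radius $k$, using the Bessel asymptotic (\ref{ref16})); this, plus the eigenfunction $L^p$ bounds (\ref{ref17}), closes the proof.

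The gap in your version is precisely at the point where you try to extract the $\tfrac12-\tfrac1q$ gain. You place it on a $1$D Bourgain $L^4_{t,x_3}$ estimate "after linearizing the dispersion in $n'$," but (i) the relevant gain is just a time-Sobolev embedding and not a refined Strichartz gain, and (ii) within a dyadic shell $\{z_{n,n'}\sim N\}$ there are $\sim N^2$ modes whose time phases $e^{2\pi i z_{n,n'}t}$ are \emph{not} orthogonal on $I$ (near-coincident $z$'s differ by arbitrarily small amounts), so passing from $L_t^q$ to $L^2_t$-type mass within a shell requires an actual orthogonalization step. That step is exactly what the integer-rounding operator $A$ supplies, replacing the width-$\sim N$ shell by width-$1$ clusters of size $N(k)\lesssim k^\epsilon$. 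Your suggested fix in the final paragraph — do not expand $P_N$, use $L^2$-orthogonality in $x'$ and interpolated $L^p_{x'}\to L^2_{x'}$ mapping — addresses the spatial side (where orthogonality is automatic) but not the temporal side, which is where the "essentially repeating eigenvalues" actually bite. Without the integer rounding and the arithmetic bound on $N(k)$, the shell estimate you claim does not follow and the square-sum over $N$ never gets off the ground.
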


In order to proceed with the proof of Proposition $\ref{ref21}$, we will need to introduce a spectral projection operator.  In particular, for 
\begin{align*}
f(x)=\sum_{n,n'}a_{n,n'}e_n(x_1,x_2)e^{2\pi i n'x_3},
\end{align*}
let $Af:B_2\times\mathbb{T}\rightarrow\mathbb{C}$ be defined by
\begin{align*}
Af(x)&:=2\pi\sum_{n,n'} a_{n,n'}\lfloor z_{n,n'}/(2\pi)\rfloor e_n(x_1,x_2)e^{2\pi i n'x_3}, \quad x\in B_2\times\mathbb{T},
\end{align*}
with $\lfloor t\rfloor$ denoting the greatest integer less than or equal to $t$ for $t\geq 0$, and note that the operator $A-\sqrt{-\Delta}$ is then bounded on $H_x^s$ for any $s\in\mathbb{R}$.  

We will also make use of the associated evolution $S_A(t)f$ for $t>0$ given by
\begin{align*}
(S_A(t)f)(x)&:=\sum_{n,n'} a_{n,n'}e_n(x_1,x_2)e^{2\pi i(n'x_3-\lfloor z_{n,n'}/(2\pi)\rfloor t)}.
\end{align*}

To establish Proposition $\ref{ref21}$, we note that after setting $u=S(t)f$, the equality
\begin{align*}
iu_t-Au&=-(A-\sqrt{-\Delta})u
\end{align*}
can be rewritten in integral form as
\begin{align}
S(t)f&=S_A(t)f+i\int_0^t S_A(t-t')[(A-\sqrt{-\Delta})S(t')f]dt'.\label{ref22}
\end{align}
It therefore suffices to establish the following lemma (see, e.g. the last step in the proof of Theorem $2.1$ in \cite{BLP}, as well as the references cited there):

\begin{lemma}
\label{ref23}
For $p$, $q$ and $s$ be as stated in Proposition $\ref{ref21}$, one has
\begin{align*}
\lVert S_A(t)f\rVert_{L_x^pL_t^q(I)}&\lesssim \lVert f\rVert_{H_x^s}
\end{align*}
for every $f\in H_x^s(B_2\times\mathbb{T})$ all $0<T<1$, with $I=[0,T]\subset\mathbb{R}$.
\end{lemma}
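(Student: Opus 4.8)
The plan is to use that, since $\lfloor z_{n,n'}\rfloor\in\mathbb Z$, the function $t\mapsto S_A(t)f(x)$ is $1$-periodic for each fixed $x$; hence $\|S_A(t)f(x,\cdot)\|_{L_t^q(I)}\le\|S_A(t)f(x,\cdot)\|_{L_t^q([0,1])}$ whenever $|I|\le1$, and it suffices to treat $I=[0,1]$. I would then decompose $f=\sum_{M\in2^{\mathbb N}}\Delta_M f$ into Littlewood--Paley blocks along the eigenvalues, $\Delta_M f$ collecting the modes with $\langle z_{n,n'}\rangle\sim M$. Because the evolutions $S_A(t)\Delta_M f$ have pairwise disjoint Fourier supports in the $t$ variable, the Littlewood--Paley inequality in $t$ together with Minkowski's inequality (valid since $p,q\ge2$) reduces the claim to a single-block bound $\|S_A(t)\Delta_M f\|_{L_x^pL_t^q([0,1])}\lesssim M^{s_0}(\log(2+M))^{C}\|\Delta_M f\|_{L_x^2}$, with $s_0=\tfrac12-\tfrac1q+\max\{\tfrac12-\tfrac2p,0\}$; the logarithm (coming from the $p=4$ case of \eqref{ref17}) and the gap between $s_0$ and the hypothesized $s$ are absorbed when summing over dyadic scales, using the strict inequality $s>s_0$.

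For the single-block estimate, the first step is to dispose of the time variable: for fixed $x$, $t\mapsto S_A(t)\Delta_M f(x)$ is a trigonometric polynomial whose frequencies $\lfloor z_{n,n'}\rfloor$ lie in an interval of length $O(M)$, so Bernstein's inequality yields $\|S_A(t)\Delta_M f(x,\cdot)\|_{L_t^q([0,1])}\lesssim M^{1/2-1/q}\|S_A(t)\Delta_M f(x,\cdot)\|_{L_t^2([0,1])}$, and Plancherel in $t$ identifies the latter, pointwise in $x$, with the spectral square function $\big(\sum_{k\sim M}|Q_k f(x)|^2\big)^{1/2}$, where $Q_k$ projects onto the cluster $\{(n,n'):\lfloor z_{n,n'}\rfloor=k\}$. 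It then remains to bound $\big\|\big(\sum_{k\sim M}|Q_k f|^2\big)^{1/2}\big\|_{L_x^p}$ by $M^{\max\{1/2-2/p,0\}}(\log(2+M))^{C}\|\Delta_M f\|_{L_x^2}$, which by Minkowski reduces to the individual cluster estimates $\|Q_k f\|_{L_x^p}\lesssim M^{\max\{1/2-2/p,0\}}(\log(2+M))^{C}\|Q_k f\|_{L_x^2}$ for $k\sim M$.

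The cluster estimate is the crux and the step I expect to be the main obstacle. Its delicacy is exactly the presence of ``essentially repeating'' eigenvalues: a cluster $\{(n,n'):\lfloor z_{n,n'}\rfloor=k\}$ can contain $\sim M$ modes, so a naive Cauchy--Schwarz over an entire cluster is far too lossy, and the target exponent $\max\{\tfrac12-\tfrac2p,0\}$ is strictly better than the generic Sogge spectral-cluster exponent for a $3$-manifold --- the gain must be extracted from the radial$\times$periodic structure. To do this I would exploit that the cluster multiplicity is organized: writing $Q_k f=\sum_n e_n(x')\,h_{k,n}(x_3)$ with $h_{k,n}(x_3)=\sum_{n':\,\lfloor\sqrt{\lambda_n^2+(n')^2}\rfloor=k}a_{n,n'}e^{2\pi in'x_3}$, for each fixed $B_2$-frequency index $n$ the admissible $n'$ form a pair of intervals of consecutive integers, whose length is governed by \eqref{ref16} and is $O(1)$ unless $\lambda_n$ is within $O(\sqrt k)$ of $k$. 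This suggests handling the $x_3$-integration block by block in $n$ (orthogonality, or Bernstein on intervals, in the periodic variable), performing the $x'$-integration with the eigenfunction bounds \eqref{ref17} (treating $p\le4$, where $\|e_n\|_{L_{x'}^p}\lesssim1$, separately from $p>4$, where $\|e_n\|_{L_{x'}^p}\lesssim\lambda_n^{1/2-2/p}\lesssim M^{1/2-2/p}$), and then summing over $n$ via \eqref{ref16}. The hard part will be to arrange these three summations so that the combined loss from the eigenfunction growth in $x'$ and from the cluster multiplicity does not exceed $M^{\max\{1/2-2/p,0\}}$ up to logarithms, which is precisely where the use of the spectral projectors $A$ and $Q_k$ (in the spirit of \cite{BLP}) is essential.
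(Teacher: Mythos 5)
Your setup through the Bernstein step tracks the paper's argument reasonably closely: the paper compresses your dyadic decomposition plus Bernstein into one application of the Sobolev embedding $H_t^{1/2-1/q}\hookrightarrow L_t^q$ followed by Plancherel in $t$, and both routes land on the same square function $\big(\sum_k\langle k\rangle^{1-2/q}\,|Q_k f|^2\big)^{1/2}$. The divergence is in how the cluster is then treated, and it is there that your proposal has a genuine gap.

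The paper does not prove, nor does it need, a Sogge-type spectral cluster estimate of the form $\lVert Q_k f\rVert_{L_x^p}\lesssim k^{\max\{1/2-2/p,0\}+\epsilon}\lVert Q_k f\rVert_{L_x^2}$. It applies Cauchy--Schwarz directly across the modes in a cluster, acquiring a factor $N(k)=\#\{(n,n'):\lfloor z_{n,n'}\rfloor=k\}$, moves the $L_x^p$ norm onto each individual eigenfunction $e_n$ via \eqref{ref17}, and then invokes an arithmetic bound $N(k)\leq C(\epsilon)k^\epsilon$ attributed to lattice-point counting. That arithmetic input is the entire content of the ``spectral projector'' ingredient here; there is no orthogonality analysis in $x_3$, no block-by-block decomposition in $n$, and no subtle cancellation to exploit. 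Your proposal never identifies this bound; you instead estimate the cluster multiplicity at $\sim M$, conclude that naive Cauchy--Schwarz must be far too lossy, and set out to extract a genuine Sogge-type gain from the radial--times--periodic structure --- a step you yourself flag as ``the main obstacle'' and leave unfinished. Because your reduction terminates exactly at this unproven step, the proof is incomplete as written, even though every step up to that point is correct.

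One further point worth flagging: your heuristic that a cluster contains $\sim M$ modes is in direct tension with the $N(k)\lesssim k^\epsilon$ bound the paper asserts and uses. This is worth resolving before proceeding, because it determines whether the cluster is nearly singleton (in which case Cauchy--Schwarz suffices, as in the paper) or has genuine multiplicity (in which case you would indeed need the finer argument you sketch, and the lemma may require a larger $s$).
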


\begin{proof}
Writing
\begin{align*}
f(x)=\sum_{n,n'} a_{n,n'}e_n(x_1,x_2)e^{2\pi i n'x_3}
\end{align*}
and using the Sobolev inequality in time, we obtain, with $H_t^{\frac{1}{2}-\frac{1}{q}}([0,1])$ denoting the usual fractional Sobolev space on the time interval $[0,1]$,
\begin{align}
\nonumber &\lVert S_A(t)f\rVert_{L_x^pL_t^q(I)}\\
\nonumber &\hspace{0.2in}\lesssim \bigg\lVert \sum_{n,n'} a_{n,n'}e_n(x_1,x_2)e^{2\pi i(n'x_3+\lfloor z_{n,n'}/(2\pi)\rfloor t)}\bigg\rVert_{L_x^pH_t^{\frac{1}{2}-\frac{1}{q}}([0,1])}\\
\nonumber &\hspace{0.2in}=\bigg\lVert \bigg(\sum_k \langle k\rangle^{1-\frac{2}{q}}\bigg|\sum_{\substack{n,n'\\\lfloor z_{n,n'}/(2\pi)\rfloor=k}} a_{n,n'}e_n(x_1,x_2)e^{2\pi i n'x_3}\bigg|^2\bigg)^{1/2}\bigg\rVert_{L_x^p}\\
&\hspace{0.2in}\leq \bigg\lVert \bigg(\sum_k \langle k\rangle^{1-\frac{2}{q}}N(k)\bigg(\sum_{\substack{n,n'\\\lfloor z_{n,n'}/(2\pi)\rfloor=k}} |a_{n,n'}|^2|e_n(x_1,x_2)|^2\bigg) \bigg)^{1/2}\bigg\rVert_{L_x^p},\label{ref24}
\end{align}
where we have set
\begin{align*}
N(k):=\#\{(n,n'):\lfloor z_{n,n'}/(2\pi)\rfloor=k\}.
\end{align*}

It now follows from standard estimates that one has the bound
\begin{align*}
N(k)\leq Ck\,\, \textrm{for all}\,\, k\geq 1
\end{align*}
for a suitable constant $C>0$.  Indeed, we argue following a suggestion of Z. Rudnik \cite{Rudnik}, and note that for fixed $k$, the condition $\{(n,n')\in \lfloor z_{n,n'}/(2\pi)\rfloor=k\}$ corresponds to $$k^2\leq \frac{\lambda_n^2}{(2\pi)^2}+(n')^2< (k+1)^2,$$ so that the asymptotics ($\ref{ref16}$) for $\lambda_n$ give $$k^2\leq \frac{1}{4}(n-\frac{1}{4}+O(\frac{1}{n}))^2+(n')^2< k^2+2k+1,$$ which we can rewrite as $$4k^2\leq n^2-\frac{1}{2}n+4(n')^2+O(1)<4k^2+8k+O(1).$$  Muliplying these inequalities through by $16$ and setting $K:=8k$, this becomes $$K^2\leq (4n-1)^2+(8n')^2+O(1)\leq K^2+16K.$$  It follows that $N(k)$ is essentially controlled by the number of lattice points in an annulus  of inner radius $K$ and outer radius $K+8$.  Since the number of lattice points in a disk of radius $R$ is $\pi R^2+O(R^\theta)$ for some $\theta<2/3$, we can estimate this by $\pi (K+8)^2-\pi K^2+O(K^\theta)=16K\pi+O(K^\theta)=128k\pi+O(k^\theta)=O(k)$ as desired.

We therefore obtain
\begin{align}
\nonumber (\ref{ref24})&\lesssim \bigg\lVert \bigg(\sum_k \langle k\rangle^{2-\frac{2}{q}}\bigg(\sum_{\substack{n,n'\\\lfloor z_{n,n'}/(2\pi)\rfloor=k}} |a_{n,n'}|^2|e_n(x_1,x_2)|^2\bigg) \bigg)^{1/2}\bigg\rVert_{L_x^p}\\
&\leq \bigg( \sum_k\sum_{\substack{n,n'\\\lfloor z_{n,n'}/(2\pi)\rfloor=k}}\langle k\rangle^{2-\frac{2}{q}}|a_{n,n'}|^2\lVert e_n(x_1,x_2)\rVert_{L_x^{p}}^2\bigg)^{1/2}.\label{ref25}
\end{align}

To conclude the proof of the lemma, we fix $\epsilon>0$ and observe from (\ref{ref17}) that for $p\leq 4$, ($\ref{ref25}$) is bounded by
\begin{align}
\bigg(\sum_k\sum_{\substack{n,n'\\\lfloor z_{n,n'}/(2\pi)\rfloor=k}} \langle k\rangle^{2-\frac{2}{q}+2\epsilon}|a_{n,n'}|^2\bigg)^{1/2}\lesssim \lVert f\rVert_{H_x^{1-\frac{1}{q}+\epsilon}}\label{ref26}
\end{align}
while for $p>4$, one gets the bound
\begin{align}
\bigg(\sum_k\sum_{\substack{n,n'\\\lfloor z_{n,n'}/(2\pi)\rfloor=k}} \langle k\rangle^{3-\frac{2}{q}-\frac{4}{p}}|a_{n,n'}|^2\bigg)^{1/2}\lesssim \lVert f\rVert_{H_x^{\frac{3}{2}-\frac{1}{q}-\frac{2}{p}}}.\label{ref27}
\end{align}

Combining ($\ref{ref25}$) with ($\ref{ref26}$)--($\ref{ref27}$) completes the proof of Lemma $\ref{ref23}$.
\end{proof}

\section{Probabilistic estimates of the linear evolution}
\label{ref28}

We now establish a collection of probabilistic estimates on the linear evolution.  These estimates comprise the main probabilistic component of our argument.

For $\alpha>0$ and $0\leq s<\alpha$, define
\begin{align}
\rho_*(\alpha,s)=\left\lbrace\begin{array}{ll}\frac{4}{1-2(\alpha-s)},&\textrm{if}\,\,\, 0<\alpha-s<\frac{1}{2},\\
\infty,&\textrm{if}\,\,\, \alpha-s\geq \frac{1}{2}.\end{array}\right.\label{ref29}
\end{align}
and
\begin{align}
\rho_*(\alpha)=\rho_*(\alpha,0)\label{ref30}
\end{align}

Our probabilistic estimates now take the form of the following lemma:

\begin{lemma}
\label{ref31}
Fix $T\in (0,1)$ and $\alpha>0$.  Then there exists $c>0$ such that the estimates
\begin{itemize}
\item[(i)] for all $0<s<\alpha$ and $1\leq p\leq \rho_*(\alpha,s)$,
\begin{align}
\mu(\{\phi:\lVert (\sqrt{-\Delta})^s\phi\rVert_{L^p}>\lambda\})\lesssim e^{-c\lambda^2},\label{ref32}
\end{align}
and
\item[(ii)] for all $(p,q)\in [1,\infty)^2$ with $1\leq p\leq\rho_*(\alpha)$ and $2\leq q<\infty$,
\begin{align}
\mu(\{\phi:\lVert S(t)\phi\rVert_{L_x^pL_t^q(I)}>\lambda\})\lesssim e^{-c\lambda^2},\label{ref33}
\end{align}
\end{itemize}
are valid for all $\lambda>0$, where $\phi$ is randomly chosen initial data of the form $(\ref{ref6})$.
\end{lemma}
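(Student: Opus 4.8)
The plan is to use the standard randomization machinery: bound the relevant space (or space--time) norm in $L_\omega^r$ with a constant growing like $\sqrt r$, and then pass to Gaussian tail bounds by a Chebyshev-and-optimize lemma (if $\lVert X\rVert_{L_\omega^r}\leq C_0\sqrt r$ for all $r\geq r_0$, then $p(|X|>\lambda)\lesssim e^{-c\lambda^2}$ for all $\lambda>0$). Two reductions come first. Part (i) reduces to the case $s=0$: since $(\sqrt{-\Delta})^s\phi_\omega=\sum_{n,n'} g_{n,n'}(\omega)(z_{n,n'})^{s-\alpha}e_n(x_1,x_2)e^{2\pi in'x_3}$ is again a random datum of the form $(\ref{ref6})$ with $\alpha$ replaced by $\alpha-s>1$, and one checks from $(\ref{ref29})$ that $\rho_*(\alpha,s)=\rho_*(\alpha-s)$. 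And part (ii) will follow from the very same estimate used for (i) with $s=0$: for fixed $(x,t)$ the series defining $S(t)\phi_\omega$ is a centered Gaussian whose variance $\sum_{n,n'}|e_n(x_1,x_2)|^2(z_{n,n'})^{-2\alpha}$ is independent of $t$ (the factors $|e^{2\pi i(n'x_3+z_{n,n'}t)}|$ equal $1$), so after integrating in $t$ over the fixed interval one is left with exactly the spatial quantity needed for (i).

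For the moment bound I would fix $r\geq\max\{p,q\}$ --- only large $\lambda$, hence large $r$, is relevant --- and use Minkowski's integral inequality to bring $L_\omega^r$ to the innermost position, $\lVert S(t)\phi_\omega\rVert_{L_\omega^r L_x^p L_t^q}\leq\lVert S(t)\phi_\omega\rVert_{L_x^p L_t^q L_\omega^r}$ (and similarly, without the $L_t^q$ factor, for (i)). For fixed $(x,t)$ the innermost factor is the $L^r$ norm of a centered (complex) Gaussian, hence $\lVert S(t)\phi_\omega(x)\rVert_{L_\omega^r}\lesssim\sqrt r\,F(x_1,x_2)$, where $F(x_1,x_2):=\big(\sum_{n,n'}|e_n(x_1,x_2)|^2(z_{n,n'})^{-2\alpha}\big)^{1/2}$ is independent of $x_3$ and $t$. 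Thus everything reduces to the deterministic claim $F\in L_x^p(B_2\times\mathbb{T})$, i.e. $F\in L^p(B_2)$, for every $1\leq p<\rho_*(\alpha)$.

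For the deterministic estimate I would first carry out the $n'$-summation: since $\alpha>1>\tfrac12$ and $z_{n,n'}^2=\lambda_n^2+(n')^2$ with $\lambda_n\sim n$ by $(\ref{ref16})$, comparison with an integral gives $\sum_{n'}(z_{n,n'})^{-2\alpha}\sim\lambda_n^{1-2\alpha}\sim n^{1-2\alpha}$. Then for $p\geq 2$, Minkowski's inequality in $L^{p/2}(B_2)$ yields $\lVert F\rVert_{L^p(B_2)}^2\lesssim\sum_n n^{1-2\alpha}\lVert e_n\rVert_{L^p(B_2)}^2$, and the eigenfunction estimates $(\ref{ref17})$ make the convergence of the right-hand side explicit: for $2\leq p\leq 4$ the series is $\sum_n n^{1-2\alpha}$ up to a logarithmic factor, which converges since $\alpha>1$; for $p>4$ it is $\sum_n n^{2-2\alpha-4/p}$, which converges precisely when $p<\tfrac{4}{3-2\alpha}$ (for all $p$ if $\alpha\geq\tfrac32$). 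For $1\leq p<2$ one instead uses $L^p(B_2\times\mathbb{T})\hookrightarrow L^2(B_2\times\mathbb{T})$, the domain having finite measure. This recovers exactly the range $p<\rho_*(\alpha)$ of $(\ref{ref29})$--$(\ref{ref30})$, and combining $\lVert\cdot\rVert_{L_\omega^r}\lesssim\sqrt r$ with the tail lemma gives $(\ref{ref32})$--$(\ref{ref33})$.

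I expect the only genuinely substantive point, beyond bookkeeping, to be the $p>4$ case of the deterministic bound, where the eigenfunction growth $\lVert e_n\rVert_{L^p(B_2)}\sim n^{1/2-2/p}$ competes against the decay $n^{1-2\alpha}$ produced by the $n'$-summation; it is exactly this competition that fixes the threshold $\rho_*$. The extra power of $\lambda_n$ gained from the $n'$-sum, together with the $L_x^\infty$-boundedness of the periodic eigenfunctions $e^{2\pi in'x_3}$, is the technical manifestation of the ``$B_3$-to-$\mathbb{T}^3$ interpolation'' mentioned in the introduction. The remaining ingredients --- making sense of the almost-sure convergence of the random series in the relevant norm, and checking the condition $r\geq\max\{p,q\}$ needed for the Minkowski step (harmless, since only large $\lambda$ matter) --- are routine.
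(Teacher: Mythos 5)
Your proposal is correct and follows the paper's strategy essentially step for step: Chebyshev on the $r$-th moment, Minkowski to bring the $L^r_\omega$ integration innermost where the Gaussian structure gives the $\sqrt r$ factor, the $B_2$ eigenfunction bounds $(\ref{ref17})$ to control what remains, and optimization in $r$ to produce the Gaussian tail. Two of your organizational choices are mildly cleaner than the paper's: you reduce (i) to $s=0$ at the outset (noting $(\sqrt{-\Delta})^s\phi_\omega$ is a datum of the same form with $\alpha$ replaced by $\alpha-s$, and $\rho_*(\alpha,s)=\rho_*(\alpha-s)$), and you sum out $n'$ directly via $\sum_{n'}(z_{n,n'})^{-2\alpha}\sim\lambda_n^{1-2\alpha}$, whereas the paper keeps (i) and (ii) parallel and estimates the double sum $\sum_{n,n'}(z_{n,n'})^{2(s-\alpha)}n^{1-4/p}$ by introducing an auxiliary splitting exponent $\beta\in(0,1)$ and then choosing $\beta$ so that both the $n$- and $n'$-sums converge. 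Both computations land on the same threshold $\rho_*$, so the difference is a matter of bookkeeping, not of substance.
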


It is important to note that the estimates of Lemma $\ref{ref31}$ make essential use of the product structure of the domain $B_2\times\mathbb{T}$.  Indeed, the reader may find it useful to compare the range $p<\rho_*(\alpha)$ with the corresponding range in the $B_3$ case \cite{BB4}.  In our setting, the gain in integrability is essentially a consequence of the $B_2$ eigenfunction bounds ($\ref{ref17}$) and the fact that eigenfunctions on $\mathbb{T}$ (and indeed $\mathbb{T}^d$ for $d\geq 1$) are bounded in $L_x^\infty$.

\begin{proof}[Proof of Lemma $\ref{ref31}$]
Observing that we have $\rho_*(\alpha,s)>4$, we note that for each of the statements (i) and (ii) there is no loss of generality in considering the case $p>4$.

Indeed, for $p\leq 4$, $\lambda>0$ and $\epsilon>0$, it follows from H\"older's inequality that there exists $c>0$ with
\begin{align*}
&\mu(\{\phi:\lVert (\sqrt{-\Delta})^s\phi\rVert_{L_x^p(B_2\times\mathbb{T})}>\lambda\})\\
&\hspace{0.8in}\leq \mu(\{\phi:\lVert (\sqrt{-\Delta})^{s}\phi\rVert_{L_x^{4+\epsilon}}>c\lambda\})
\end{align*}
and
\begin{align*}
&\mu(\{\phi:\lVert S(t)\phi\rVert_{L_x^pL_t^q(I)}>\lambda\})\\
&\hspace{0.8in}\leq \mu(\{\phi:\lVert S(t)\phi\rVert_{L_x^{4+\epsilon}L_t^q(I)}>c\lambda\}).
\end{align*}

We therefore let $p>4$ be given, for which ($\ref{ref17}$) gives
\begin{align*}
\lVert e_n\rVert_{L_x^p(B_2)}\lesssim n^{\frac{1}{2}-\frac{2}{p}}.
\end{align*}
To establish ($\ref{ref32}$), suppose that $p<\rho_*(\alpha,s)$, and let $\lambda>0$ be given.  Now, fix $r\geq p$ and write
\begin{align*}
\mu(\{\phi:\lVert (\sqrt{-\Delta})^s\phi\rVert_{L^p}>\lambda\})&\leq \frac{1}{\lambda^r}\mathbb{E}\Big[ \lVert (\sqrt{-\Delta})^s\phi\rVert_{L^p}^r \Big]\\
&\leq \frac{1}{\lambda^r}\left\lVert \left(\mathbb{E}\Big[ |(\sqrt{-\Delta})^s\phi(x)|^{r}\Big]\right)^{1/r}\right\rVert_{L^p}^r\\
&\lesssim \frac{(\sqrt{r})^r}{\lambda^r}\left\lVert \bigg(\sum_{n,n'} z_{n,n'}^{2s}|\gamma_{n,n'}|^2|e_n(x_1,x_2)|^2\bigg)^{1/2} \right\rVert_{L^p}^r.
\end{align*}
This quantity is then equal to
\begin{align}
\nonumber &\frac{(\sqrt{r})^r}{\lambda^r}\bigg\lVert \sum_{n,n'} (z_{n,n'})^{2s}|\gamma_{n,n'}|^2|e_n(x_1,x_2)|^2\bigg\rVert_{L^{p/2}}^{r/2}\\
\nonumber &\hspace{0.2in}\leq\frac{(\sqrt{r})^r}{\lambda^r}\bigg(\sum_{n,n'} (z_{n,n'})^{2s}|\gamma_{n,n'}|^2\lVert e_n(x_1,x_2)\rVert_{L^{p}}^{2}\bigg)^{r/2}\\
&\hspace{0.2in}\lesssim\frac{(\sqrt{r})^r}{\lambda^r}\bigg(\sum_{n,n'} (z_{n,n'})^{2s}|\gamma_{n,n'}|^2n^{2(\frac{1}{2}-\frac{2}{p})}\bigg)^{r/2}\label{ref34}
\end{align}
where to obtain the second inequality we have used ($\ref{ref17}$) and recalled that $p>4$ holds by assumption.  Now, recalling the hypothesis $s<\alpha$, and using the eigenvalue bound $\lambda_n\gtrsim n$ (for $n$ sufficiently large, as a consequence of ($\ref{ref16}$)), we have
\begin{align}
\nonumber &\sum_{n,n'}(z_{n,n'})^{2s}|\gamma_{n,n'}|^2n^{2(\frac{1}{2}-\frac{2}{p})}\\
\nonumber &\hspace{0.2in}\lesssim \left(\sup_{n,n'} (z_{n,n'})^{2(s-\alpha)}n^{1-\frac{4}{p}}\right)\sum_{n,n'} (z_{n,n'})^{2\alpha}|\gamma_{n,n'}|^2\\
\nonumber &\hspace{0.2in}\lesssim \left(\sup_{n,n'}\, n^{1+2(s-\alpha)-\frac{4}{p}}\right)\sum_{n,n'} (z_{n,n'})^{2\alpha}|\gamma_{n,n'}|^2\\
&\hspace{0.2in}\lesssim \sup_{n,n'}\, n^{1-2(\alpha-s)-\frac{4}{p}},\label{ref35}
\end{align}  
where we have recalled that $\sum_{n,n'} (z_{n,n'})^{2\alpha}|\gamma_{n,n'}|^2<\infty$ since $(F,G)\in H_x^\alpha\times H_x^{\alpha-1}$.

Suppose first that $\alpha-s\geq \frac{1}{2}$ (recall that in this case $p$ is subject only to the restriction $p>4$).  We then have $1-2(\alpha-s)-\frac{4}{p}<0$, so that the right-hand side of ($\ref{ref35}$) is finite.  Alternatively, if $0<\alpha-s<\frac{1}{2}$, then we have $p\leq \frac{4}{1-2(\alpha-s)}$ (and thus $1-2(\alpha-s)-\frac{4}{p}<0$) so that the right-hand side of ($\ref{ref35}$) is again finite.  In view of ($\ref{ref34}$), in both cases we have thus obtained
\begin{align}
\mu(\{\phi:\lVert (\sqrt{-\Delta})^s\phi\rVert_{L^p}>\lambda\})&\lesssim (\sqrt{r}/\lambda)^r.\label{ref38}
\end{align}
Minimizing the right hand side of ($\ref{ref38}$) over all $r\geq p$, this completes the proof of ($\ref{ref32}$).

To complete the proof of the lemma, it remains to show ($\ref{ref33}$).  For this, we argue as above.  In particular, letting $4<p<\rho_*(\alpha)$, $2\leq q<\infty$ and $\lambda>0$ be given, we have
\begin{align}
\mu(\{\phi:\lVert S(t)\phi\rVert_{L_x^pL_t^q(I)}>\lambda\})&\leq \frac{(\sqrt{r})^r}{\lambda^r}\bigg\lVert\bigg(\sum_{n,n'} |\gamma_{n,n'}|^2|e_n(x_1,x_2)|^2\bigg)^{1/2}\bigg\rVert_{L_x^pL_t^q(I)}^r\label{ref39}
\end{align}
for every $r\geq \max\{p,q\}$.  Arguing as in ($\ref{ref34}$), we obtain the bound
\begin{align}
\nonumber(\ref{ref39})&\leq \frac{(\sqrt{r})^r}{\lambda^r}\bigg[\bigg(\sup_{n,n'}\, (z_{n,n'})^{-2\alpha}n^{1-\frac{4}{p}}\bigg)\sum_{n,n'} |\gamma_{n,n'}|^2\bigg]^{r/2},
\end{align}
and thus, arguing as above,
\begin{align*}
\mu(\{\phi:\lVert S(t)\phi\rVert_{L_x^pL_t^q(I)}>\lambda\})&\lesssim (\sqrt{r}/\lambda)^r.
\end{align*}
Optimizing in the choice of $r$ now gives ($\ref{ref33}$) as desired.  This completes the proof of the lemma.
\end{proof}

\section{Estimates of the nonlinearity}
\label{ref43}

In this section we establish two lemmas which will provide estimates for the nonlinear term of the Duhamel formula ($\ref{ref9}$).  These lemmas, when combined with the probabilistic bounds of Section $\ref{ref28}$, will facilitate the proof of the local well-posedness result stated in Theorem $\ref{ref10}$ by allowing us to close a contraction mapping argument in suitable $X^{s,b}$ norms.

We begin with the following lemma.
\begin{lemma}
\label{ref44}
Fix $0<s\leq 1$ and suppose that $p\in [1,\infty)$ satisfies 
\begin{align*}
\textrm{(i)}\,\, p>\frac{2}{2-s}\,\, \textrm{if}\,\, 0<s<\frac{1}{2}, \quad\quad \textrm{(ii)}\,\, p>\frac{6}{5-2s}\,\, \textrm{if}\,\, \frac{1}{2}\leq s<1, and
\end{align*}
\begin{align*}
\textrm{(iii)}\,\, p\geq 2\,\, \textrm{if}\,\, s=1.
\end{align*}

Then there exists $\epsilon>0$ such that for every $b\in (\frac{1}{2},\frac{1}{2}+\epsilon)$ there exists a constant $C_b>0$ such that, for all $f\in L_x^p(B_2\times \mathbb{T};L_t^2(I))$,
\begin{align*}
\left\lVert \int_0^t S(t-\tau)(\sqrt{-\Delta})^{-1}f(\tau)d\tau\right\rVert_{X^{s,b}(I)}\leq C_b\lVert f\rVert_{L_x^p(B_2\times \mathbb{T};L_t^2(I))},
\end{align*}
where $I=[0,T]$ with $0<T<1$.
\end{lemma}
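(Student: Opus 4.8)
The goal is a Duhamel-type estimate: the map $f \mapsto \int_0^t S(t-\tau)(\sqrt{-\Delta})^{-1}f(\tau)\,d\tau$ is bounded from $L_x^p(L_t^2(I))$ into $X^{s,b}(I)$. The standard route is duality: it suffices to show that $S(t)$ — or more precisely, $S(-t)$ applied in reverse — maps $X^{s,b}(I)$, or rather its dual $X^{-s,-b}(I)$, boundedly against test functions, reducing matters to a bound on $S(t)^*$ from $X^{-s,1-b}$ (or similar) into $L_x^{p'}L_t^2$. Concretely, I would first reduce to the estimate without the Duhamel integral by the usual $X^{s,b}$ machinery: the inhomogeneous $X^{s,b}$ estimate (e.g. via the transference principle / the formula $\int_0^t S(t-\tau)F(\tau)d\tau$ and its $X^{s,b}$ bound in terms of $\|F\|_{X^{s,b-1}}$, valid for $b>1/2$ close to $1/2$) shows that it is enough to prove
\begin{align*}
\lVert (\sqrt{-\Delta})^{-1} f \rVert_{X^{s,b-1}(I)} \lesssim \lVert f \rVert_{L_x^p(L_t^2(I))}.
\end{align*}
Since $b-1 < -1/2 + \epsilon$ is slightly below $-1/2$, and $(\sqrt{-\Delta})^{-1}$ gains one derivative, by the definition of the $X^{s,b}$ norm this in turn follows once one controls $\lVert g \rVert_{X^{s-1,b-1}(I)}$ by $\lVert g \rVert_{L_x^p L_t^2}$, i.e. a dual-Strichartz-type estimate. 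I would then dualize this: $\lVert g \rVert_{X^{s-1,b-1}}$ is the dual norm to $X^{1-s,1-b}$, so I need $\lVert v \rVert_{L_x^{p'} L_t^2} \lesssim \lVert v \rVert_{X^{1-s,1-b}}$ with $1-b > 1/2$.

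**Reduction to Strichartz.** So the heart of the matter is a Strichartz-type embedding $X^{1-s,1-b} \hookrightarrow L_x^{p'}L_t^2$. Here $p' = p/(p-1) \in (1,2]$, and the condition $2 \le q < \infty$ in Proposition \ref{ref21} is not directly applicable since $q = 2$ is the endpoint; however, since $p' \le 2$, one is in the "low" range $p' \le 4$, so by Hölder on the compact domain $B_2 \times \mathbb{T}$ (finite measure) one has $L_x^{p'} \subset L_x^2$, and it suffices to prove $X^{1-s,1-b} \hookrightarrow L_{t,x}^2$, which is immediate (and sharp, gaining room) — but that would only need $1-s \ge 0$, i.e. $s \le 1$, with no constraint matching (i)--(iii). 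This tells me the $L_x^{p'} \subset L_x^2$ reduction is too lossy and I must instead keep the spatial integrability and exploit the eigenfunction bounds (\ref{ref17}) together with $\|e^{2\pi i n' x_3}\|_{L^\infty} = 1$, exactly as in the proof of Lemma \ref{ref23}. That is, I unfold the $X^{b'}$ norm in time via the almost-orthogonality in the $\langle m - z_{n,n'}\rangle$ weight (for $b' > 1/2$ this gives an $\ell^1$-in-$m$ summable bound, so $L_t^2 \to L_t^\infty$ in effect is not needed; rather $\sum_m \langle m - z\rangle^{-2b'} < \infty$ lets one pass to a fixed-time / fixed-frequency square function), reducing to a bound of the form
\begin{align*}
\Big\lVert \Big( \sum_k \langle k \rangle^{2(1-s)} \sum_{\lfloor z_{n,n'} \rfloor = k} |a_{n,n'}|^2 |e_n(x')|^2 \Big)^{1/2} \Big\rVert_{L_x^{p'}} \lesssim \Big( \sum \langle z_{n,n'} \rangle^{2(1-s)} |a_{n,n'}|^2 \Big)^{1/2},
\end{align*}
after absorbing the lattice-point count $N(k) \lesssim k^\epsilon$. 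Then one applies Minkowski's inequality in $L_x^{p'/2}$ (valid since $p'/2 \le 1$... here one must be careful: $p'/2 \le 1$, so Minkowski goes the favorable way for the *reverse* triangle inequality — actually for $p'/2 \le 1$ one uses $\|\sum g_k\|_{L^{p'/2}}^{p'/2} \le \sum \|g_k\|_{L^{p'/2}}^{p'/2}$, i.e. the quasi-triangle inequality) to distribute the sum, then (\ref{ref17}) with $p' < 4$ giving $\|e_n\|_{L^{p'}} \lesssim 1$, and one collects the $k$-sum. Bookkeeping the exponent $1-s$ against the decay $\langle k \rangle^{-\text{something}}$ coming from $p'$ — i.e. tracking how many of the $\langle z \rangle^{2(1-s)}$ powers are paid for by integrability versus decay — produces precisely the thresholds (i) $p > 2/(2-s)$, (ii) $p > 6/(5-2s)$, (iii) $p \ge 2$, the three cases corresponding to $0<s<1/2$, $1/2 \le s < 1$, $s=1$. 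The choice of $\epsilon$ (hence the range of admissible $b$) is dictated by the strict inequalities in (i)--(ii), giving a small amount of slack to absorb $b - 1/2$ and the $k^\epsilon$ lattice count.

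**Main obstacle.** The genuinely delicate point is the same one flagged in the introduction: the weight $\langle m - z_{n,n'}\rangle$ in the $X^{s,b}$ norm involves the irrational real numbers $z_{n,n'}$, not integers, while the physical-space series is indexed by $(n,n') $ and by the time-frequency $m \in \mathbb{Z}$; the "essentially repeating eigenvalues" mean many $(n,n')$ contribute to a single integer shell $\lfloor z_{n,n'}\rfloor = k$, and one cannot simply orthogonalize in $L_x^{p'}$ for $p' \ne 2$. Handling this forces the spectral-projector device (the operator $A$ and $S_A(t)$ of Section \ref{ref21}): I expect to first prove the estimate with $S_A$ replacing the full evolution (where the time frequencies become integers and the reduction to the square function above is clean), and then transfer back to the genuine problem via the integral identity (\ref{ref22}) together with the boundedness of $A - \sqrt{-\Delta}$ as a smoothing-by-one-less-derivative operator — paying attention that this transfer does not cost more regularity than the gap between $s$ and the $X^{s,b}$ threshold allows. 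The second subtlety is simply the arithmetic of the three exponent regimes; I expect case (ii), $1/2 \le s < 1$, where the relevant Sobolev-in-time exponent interacts nontrivially with both $\|e_n\|_{L^{p'}}$ and the $k^\epsilon$ count, to be where the bound $6/(5-2s)$ is tight and the choice of $\beta$-type auxiliary split (as in Lemma \ref{ref31}) is needed to close the $n$- and $n'$-sums separately.
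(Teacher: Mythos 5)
Your overall skeleton is correct and matches the paper: one dualizes the estimate to the embedding $X^{1-s,1-b}\hookrightarrow L_x^{p'}L_t^2$ and then, after applying H\"older in $(t,x)$, bounds the $L_x^{p'}L_t^2$ norm of the dual function. That part of the plan is sound. However, the proposal contains two material errors that prevent it from closing.

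First, the claim that $p'=p/(p-1)\in(1,2]$ is wrong. Conditions (i)--(ii) permit $p<2$; in case (i) the threshold $2/(2-s)$ lies below $4/3$, so $p$ can be well below $2$ and $p'=q$ can exceed $4$ (indeed, $q<2/s$, which tends to $\infty$ as $s\to 0$). This is not a side issue: it is precisely the regime $q>4$, where one must invoke the growth $\lVert e_n\rVert_{L^q}\lesssim n^{1/2-2/q}$ from (\ref{ref17}), that the nontrivial constraint (i) is extracted. Having set $p'\le 2$, you then (correctly) notice that the naive $L_x^{p'}\hookleftarrow L_x^2$ reduction produces no condition at all, and conclude that one must ``keep the spatial integrability'' --- but the right resolution is simply that $p'$ is not in fact $\le 2$, not that a different mechanism is needed.

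Second, the pivot to the spectral projector $A,\,S_A$ and the shell count $N(k)=\#\{\lfloor z_{n,n'}\rfloor=k\}$ is a misdiagnosis. That device is needed for $L_t^q$ with $q>2$ (Proposition \ref{ref21}/Lemma \ref{ref23}), where one must group nearby time frequencies to apply Sobolev in $t$. Here the time exponent is exactly $2$: Plancherel in $t$ turns $\lVert g\rVert_{L_x^{q}L_t^2}$ into an $\ell^2_m$-square sum over the \emph{integer} time-frequency $m$, which is already perfectly diagonal. The irrational numbers $z_{n,n'}$ and the shells $\lfloor z_{n,n'}\rfloor=k$ never enter; consequently neither $N(k)$ nor any $k^\epsilon$ loss appears anywhere in the paper's proof of this lemma. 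Your square-function reduction, while reminiscent of Lemma~\ref{ref23}, is the wrong object to be estimating.

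Finally, the thresholds (i)--(iii) are asserted to follow from ``bookkeeping'' but are never derived. In the paper they arise from two distinct mechanisms: for $0<s<1/2$ one needs $\sup_{n,n'}\lVert e_n\rVert_{L^q}/\langle z_{n,n'}\rangle^{1/2-s-\epsilon}<\infty$, which for $q>4$ forces $q<2/s$, i.e.\ $p>2/(2-s)$; for $1/2\le s<1$ one instead passes through $\lVert g\rVert_{L_t^2L_x^q}\lesssim\lVert g\rVert_{L_t^2H_x^{3/2-3/q}}$ and requires $3/2-3/q<1-s$, giving $p>6/(5-2s)$, with no eigenfunction bounds used at all; the endpoint $s=1$ is handled by taking $q=2$. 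Your guess that ``$\lVert e_n\rVert_{L^{p'}}$ and the $k^\epsilon$ count'' are what make case (ii) tight is therefore not how the estimate is actually closed. To repair the proposal: drop the spectral projector step, allow $p'>4$ and split into the two eigenfunction regimes of (\ref{ref17}), and in the range $1/2\le s<1$ apply Sobolev in $x$ rather than eigenfunction estimates.
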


\begin{proof}
Fix $\epsilon>0$ (to be chosen later in the argument) and let $b\in\mathbb{R}$ be given such that $\frac{1}{2}<b<\frac{1}{2}+\epsilon$.  Now, arguing as in the proof of the bound (21) in \cite[Lemma $2.5$]{BB2}, it suffices to estimate
\begin{align}
\left(\sum_{m,n,n'} \frac{|\hat{f}(m,n,n')|^2}{\langle z_{n,n'}\rangle^{2(1-s)}\langle 2\pi m+z_{n,n'}\rangle^{2(1-b)}}\right)^{1/2}.\label{ref44.5}
\end{align}

We bound ($\ref{ref44.5}$) by duality, writing
\begin{align*}
(\ref{ref44.5})&=\sup_{g\in \mathcal{G}} \int_I\int_{B_2\times\mathbb{T}} f(t,x)\overline{g(t,x)}dxdt
\end{align*}
where the set $\mathcal{G}$ consists of functions $g:I\times (B_2\times\mathbb{T})\rightarrow \mathbb{C}$ of the form
\begin{align*}
g(t,x)&=\sum_{m,n,n'} \frac{g_{m,n,n'}}{\langle z_{n,n'}\rangle^{1-s}\langle 2\pi m+z_{n,n'}\rangle^{1-b}}e_n(x_1,x_2)e^{2\pi i(n'x_3+mt)}
\end{align*}
which satisfy
\begin{align*}
\sum_{m,n,n'} |g_{m,n,n'}|^2\leq 1.
\end{align*}

For each $g\in \mathcal{G}$, note that by the H\"older inequality, one has
\begin{align*}
\int_I\int_{B_2\times\mathbb{T}} f\overline{g}&\leq \lVert f\rVert_{L_x^pL_t^2(I)}\lVert g\rVert_{L_x^qL_t^2(I)}
\end{align*}
with $\frac{1}{p}+\frac{1}{q}=1$. We now consider cases depending on the value of $s$.  Suppose first that $0<s<\frac{1}{2}$ holds.  We then get the inequalities
\begin{align}
\nonumber \lVert g\rVert_{L_x^qL_t^2(I)}&\leq \bigg\lVert\bigg(\sum_m\bigg(\sum_{n,n'} \frac{|g_{m,n,n'}|}{\langle z_{n,n'}\rangle^{1-s}\langle 2\pi m+z_{n,n'}\rangle^{1-b}}|e_n(x_1,x_2)|\bigg)^2\bigg)^{1/2}\bigg\rVert_{L_x^q}\\
\nonumber &\leq \bigg\lVert \sum_m \bigg(\sum_{n,n'} |g_{m,n,n'}|^2\bigg)\bigg(\sum_{n,n'} \frac{|e_n(x_1,x_2)|^2}{\langle z_{n,n'}\rangle^{2(1-s)}\langle 2\pi m+z_{n,n'}\rangle^{2(1-b)}}\bigg)\bigg\rVert_{L_x^{q/2}}^{1/2}\\
&=\bigg\lVert \sum_{n,n'} \frac{|e_n(x_1,x_2)|^2}{\langle z_{n,n'}\rangle^{1-2(s+\epsilon)} }\beta_{n,n'}\bigg\rVert_{L_x^{q/2}}^{1/2}\label{ref45}
\end{align}
with
\begin{align*}
\beta_{n,n'}:=\sum_m \bigg(\sum_{\widetilde{n},\widetilde{n}'} |g_{m,\widetilde{n},\widetilde{n}'}|^2\bigg) \langle z_{n,n'}\rangle^{-1-2\epsilon}\langle 2\pi m+z_{n,n'}\rangle^{-2(1-b)}.
\end{align*}

It now follows that we have the bound
\begin{align*}
(\ref{ref45})&\lesssim \bigg(\sum_{n,n'}\frac{\lVert e_n(x_1,x_2)\rVert_{L_x^{q}}^2}{\langle z_{n,n'}\rangle^{1-2(s+\epsilon)}}\beta_{n,n'}\bigg)^{1/2}\\
&\lesssim \bigg(\sup_{n,n'}\frac{\lVert e_n(x_1,x_2)\rVert_{L_x^{q}}}{\langle z_{n,n'}\rangle^{\frac{1}{2}-(s+\epsilon)}}\bigg)\bigg(\sum_{n,n'} \beta_{n,n'}\bigg)^{1/2}\\
&\lesssim \sup_{n,n'} \frac{\lVert e_n(x_1,x_2)\rVert_{L_x^{q}}}{\langle z_{n,n'}\rangle^{\frac{1}{2}-(s+\epsilon)}}.
\end{align*}
In view of the eigenfunction estimates (\ref{ref17}), this quantity is finite for all $q<\frac{2}{s}$ (this corresponds to the restriction $p>\frac{2}{2-s}$).  This completes the proof when $0<s<\frac{1}{2}$.

Suppose now that $\frac{1}{2}\leq s<1$.  By Minkowski's inequality for integrals and the  Sobolev embedding, we then get (for $q\geq 2$)
\begin{align*}
\lVert g\rVert_{L_x^qL_t^2(I)}\leq \lVert g\rVert_{L_t^2(I;L_x^q)}\lesssim \lVert g\rVert_{L_t^2(I;H_x^{\frac{3}{2}-\frac{3}{q}})}
\end{align*}
which is bounded by $\lVert g\rVert_{X^{1-s,1-b}(I)}\leq 1$ when $\frac{3}{2}-\frac{3}{q}<1-s$ (which corresponds to $p>\frac{6}{5-2s}$).

It remains to consider the case $s=1$, where the desired conclusion follows by taking $q=2$ (and thus $p=2$) to obtain
\begin{align*}
\lVert g\rVert_{L_{t,x}^2}&\leq \lVert g\rVert_{X^{1-s,1-b}(I)}\leq 1.
\end{align*}

This completes the proof of Lemma $\ref{ref44}$.
\end{proof}

The second lemma of this section expresses an embedding of the form 
\begin{align*}
X^{s,b}(I)\hookrightarrow L_x^pL_t^q(I)
\end{align*}
for suitable values of $s>0$ and $2\leq p,q<\infty$.  

\begin{lemma}
\label{ref46}
Fix $s>0$, $b>\frac{1}{2}$, and $2\leq p,q<\infty$.  Suppose further that $s>\max\{1-\frac{1}{q},\frac{3}{2}-\frac{1}{q}-\frac{2}{p}\}$.  Then there exists $C>0$ and $\delta>0$ such that
\begin{align}
\lVert f\rVert_{L_x^pL_t^q(I)}\leq C\lVert f\rVert_{X^{s,b}(I)}\label{ref47}
\end{align}
for all $f\in X^{s,b}(I)$.
\end{lemma}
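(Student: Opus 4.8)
The plan is to deduce the embedding from the linear Strichartz estimate of Proposition~\ref{ref21} --- more precisely from its building block, the $S_A$-estimate of Lemma~\ref{ref23} --- by a transference argument organized around a dyadic decomposition in the modulation variable $\langle m-z_{n,n'}\rangle$. The hypothesis $s>\frac12-\frac1q+\max\{\frac12-\frac2p,0\}$ will be used only through Lemma~\ref{ref23}, and the gain $b>\frac12$ only to sum the resulting dyadic series at the end.

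First I would fix $f\in X^{s,b}(I)$ and choose a representation as in~(\ref{ref18}) realizing the infimum up to a factor $2$, so that $\sum_{m,n,n'}\langle m-z_{n,n'}\rangle^{2b}\langle z_{n,n'}\rangle^{2s}|f_{m,n,n'}|^2\le 2\lVert f\rVert_{X^{s,b}}^2$. Write $f=\sum_{j\ge0}f_j$, where $f_j$ collects the indices $(m,n,n')$ with $2^j\le\langle m-z_{n,n'}\rangle<2^{j+1}$. The key point is that on this index set the \emph{integer} $l:=m-\lfloor z_{n,n'}\rfloor$ obeys $|l|\le\langle m-z_{n,n'}\rangle+1\lesssim 2^j$, so substituting $mt=\lfloor z_{n,n'}\rfloor t+lt$ produces the identity
\[
f_j(t,x)=\sum_{|l|\lesssim 2^j}e^{2\pi i lt}\,\big(S_A(t)G_{j,l}\big)(x),
\]
where $S_A$ is the spectral-projection evolution defined above and $G_{j,l}\in H_x^s$ has $(n,n')$-coefficient equal to $f_{\lfloor z_{n,n'}\rfloor+l,\,n,n'}$ when that triple lies in the $j$-th modulation block, and $0$ otherwise. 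Since $|e^{2\pi i lt}|=1$, multiplication by $e^{2\pi i lt}$ leaves the inner $L_t^q$ norm (hence the $L_x^pL_t^q$ norm) unchanged, so Lemma~\ref{ref23} gives $\lVert f_j\rVert_{L_x^pL_t^q(I)}\lesssim\sum_{|l|\lesssim 2^j}\lVert G_{j,l}\rVert_{H_x^s}$.

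The rest is bookkeeping. Put $E_j:=\sum\langle m-z_{n,n'}\rangle^{2b}\langle z_{n,n'}\rangle^{2s}|f_{m,n,n'}|^2$, with the sum over the $j$-th block, so that $\sum_j E_j\le 2\lVert f\rVert_{X^{s,b}}^2$. Applying Cauchy--Schwarz in $l$ over the $\lesssim 2^j$ admissible values, then using that --- as $l$ ranges over $\mathbb{Z}$ --- the triples $(\lfloor z_{n,n'}\rfloor+l,n,n')$ enumerate the $j$-th block exactly, together with the bound $\langle m-z_{n,n'}\rangle^{-2b}\le 2^{-2jb}$ on that block, gives $\lVert f_j\rVert_{L_x^pL_t^q(I)}\lesssim 2^{j/2}\big(\sum_{|l|\lesssim 2^j}\lVert G_{j,l}\rVert_{H_x^s}^2\big)^{1/2}\lesssim 2^{j(\frac12-b)}E_j^{1/2}$. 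Summing in $j$ and invoking Cauchy--Schwarz once more,
\[
\lVert f\rVert_{L_x^pL_t^q(I)}\le\sum_{j\ge0}\lVert f_j\rVert_{L_x^pL_t^q(I)}\lesssim\Big(\sum_{j\ge0}2^{(1-2b)j}\Big)^{1/2}\Big(\sum_{j\ge0}E_j\Big)^{1/2}\le C_b\lVert f\rVert_{X^{s,b}},
\]
the geometric series converging precisely because $b>\frac12$; this is~(\ref{ref47}). Moreover, since Lemma~\ref{ref23} is valid for every $s'>\frac12-\frac1q+\max\{\frac12-\frac2p,0\}$ and our $s$ satisfies this inequality strictly, the same computation with $s$ replaced by $s-\delta$ for $\delta>0$ small yields the sharper bound $\lVert f\rVert_{L_x^pL_t^q}\le C\lVert f\rVert_{X^{s-\delta,b}}$, which accounts for the parameter $\delta$ in the statement.

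The only genuinely delicate point --- and the reason, as the remark preceding the lemma notes, that a soft argument (e.g.\ Sobolev embedding in $t$ alone) will not suffice --- is that each dyadic block must be routed through the Strichartz bound of Lemma~\ref{ref23} rather than estimated directly, and doing so block by block costs a factor $2^{j/2}$ from summing $\lesssim 2^j$ shifted copies of $S_A(t)$; this factor is reabsorbed exactly by the modulation weight $2^{-jb}$, so the estimate is tied to the non-endpoint range $b>\frac12$ and degenerates at $b=\frac12$.
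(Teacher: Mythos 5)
Your proof is correct, and the route is genuinely different from the paper's, though both are transference arguments of the same general flavor. The paper avoids dyadic decomposition entirely: it reindexes the sum by the \emph{real} shift $\ell=m-z_{n,n'}$ over the countable set $\mathscr{I}=\{m-z_{n,n'}\}$, factors out $e^{2\pi i\ell t}$, applies Minkowski in $\ell$, and for each fixed $\ell$ recognizes the inner sum as $S(t)$ applied to an $H^s_x$ function, invoking the full Strichartz bound of Proposition~\ref{ref21}; the summability in $\ell$ is then closed by Cauchy--Schwarz against $\sum_{\ell\in\mathscr{I}}\langle\ell\rangle^{-2b}$. You instead decompose dyadically in the modulation variable, replace the real shift with the \emph{integer} shift $l=m-\lfloor z_{n,n'}\rfloor$, and recognize each dyadic piece as a sum of $O(2^j)$ modulated copies of $S_A(t)$, routed through the more elementary Lemma~\ref{ref23} (the $S_A$-Strichartz bound that is the building block of Proposition~\ref{ref21}); Cauchy--Schwarz in $l$ costs $2^{j/2}$, which the modulation weight $\langle m-z_{n,n'}\rangle^{-b}\le 2^{-jb}$ absorbs exactly when $b>\tfrac12$. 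Two small remarks. First, your version has the advantage that the shift parameter always ranges over integers, so the relevant counting is transparent; by contrast, the paper's sum $\sum_{\ell\in\mathscr{I}}\langle\ell\rangle^{-2b}$ is over a set $\mathscr{I}$ that is not obviously discrete (it is a countable union of translates $\mathbb{Z}-z_{n,n'}$), and your approach cleanly sidesteps the need to verify that this series converges. Second, the parameter $\delta$ in the statement does not actually appear in~(\ref{ref47}) and plays no role in either the paper's proof or yours; your reading of it as slack in $s$ is a reasonable guess, but it is more likely vestigial.
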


\begin{proof}
Suppose $f\in X^{s,b}(I)$ satisfies $\lVert f\rVert_{X^{s,b}(I)}\leq 1$.  We may then write
\begin{align}
\lVert f\rVert_{L_x^{p}L_t^{q}(I)}&=\bigg\lVert \sum_{m,n,n'} \frac{a_{m,n,n'}}{\langle z_{n,n'}\rangle^{s}\langle 2\pi m+z_{n,n'}\rangle^{b}}e_n(x_1,x_2)e^{2\pi i(n'x_3+mt)}\bigg\rVert_{L_x^pL_t^q(I)}\label{ref48}
\end{align}
where $(a_{m,n,n'})$ is a sequence of complex numbers with $\sum_{m,n,n'}|a_{m,n,n'}|^2\leq 1$.  We then obtain
\begin{align*}
(\ref{ref48})&\lesssim \bigg\lVert \sum_{\ell\in \mathbb{Z}}\frac{1}{\langle \ell\rangle^b}\bigg|\sum_{\substack{m,n,n'\\m+\lfloor z_{n,n'}/(2\pi)\rfloor=\ell}}\frac{a_{m,n,n'}}{\langle z_{n,n'}\rangle^{s}}e_n(x_1,x_2)e^{i(2\pi n'x_3+2\pi mt)}\bigg|\,\bigg\rVert_{L_x^pL_t^q(I)}\\
&=\bigg\lVert \sum_{\ell\in\mathbb{Z}}\frac{b_{\ell}}{\langle \ell\rangle^b}\bigg|\sum_{m,n,n'} \frac{c_{\ell,m,n,n'}}{\langle z_{n,n'}\rangle^{s}}e_n(x_1,x_2)e^{2\pi in'x_3}e^{2\pi imt}\bigg|\,\bigg\rVert_{L_x^pL_t^q(I)}
\end{align*}
where we have set 
\begin{align*}
b_{\ell}:=\bigg(\sum_{\substack{m,n,n'\\m+\lfloor z_{n,n'}/(2\pi)\rfloor=\ell}} |a_{m,n,n'}|^2\bigg)^{1/2},\quad c_{\ell,m,n,n'}(t):=\frac{a_{m,n,n'}}{b_{\ell}}.
\end{align*}

Invoking Minkowski's inequality, we obtain
\begin{align}
\nonumber(\ref{ref48})&\leq \sum_{\ell\in\mathbb{Z}}\frac{b_{\ell}}{\langle \ell\rangle^b}\bigg\lVert \sum_{\substack{m,n,n'\\m+\lfloor z_{n,n'}/(2\pi)\rfloor=\ell}} \frac{c_{\ell,m,n,n'}}{\langle z_{n,n'}\rangle^{s}}e_n(x_1,x_2)e^{2\pi i n'x_3}e^{2\pi imt}\bigg\rVert_{L_x^pL_t^q(I)}\\
&\lesssim \sup_{\ell\in\mathbb{Z}} \bigg\lVert \sum_{m,n,n'} \frac{c_{\ell,m,n,n'}}{\langle z_{n,n'}\rangle^{s}}e_n(x_1,x_2)e^{2\pi i n'x_3}e^{2\pi imt}\bigg\rVert_{L_x^pL_t^q(I)}\label{ref49}
\end{align}
where to obtain the last inequality we have observed that, in view of the Cauchy-Schwarz inequality and the conditions $b>\frac{1}{2}$, $\sum_{m,n,n'} |a_{m,n,n'}|^2\leq 1$, one has the bounds
\begin{align*}
\sum_{\ell\in\mathbb{Z}} \frac{b_{\ell}}{\langle \ell\rangle^b}&\leq \bigg(\sum_{\ell} \frac{1}{\langle \ell\rangle^{2b}}\bigg)^{1/2}\bigg(\sum_{\ell} b_{\ell}^2\bigg)^{1/2}
\end{align*}
so that
\begin{align*}
\sum_{\ell\in\mathbb{Z}} \frac{b_{\ell}}{\langle \ell\rangle^b}&\lesssim 1.
\end{align*}

To estimate the quantity on the right-hand side of (\ref{ref49}), fix $\ell\in\mathbb{Z}$ and use Lemma $\ref{ref23}$ to obtain the bound
\begin{align}
\nonumber &\bigg\lVert \sum_{\substack{m,n,n'\\m+\lfloor z_{n,n'}/(2\pi)\rfloor=\ell}} \frac{c_{\ell,m,n,n'}}{\langle z_{n,n'}\rangle^{s}}e_n(x_1,x_2)e^{2\pi i n'x_3}e^{2\pi imt}\bigg\rVert_{L_x^pL_t^q(I)}\\
\nonumber &\hspace{0.2in}=\bigg\lVert \sum_{\substack{m,n,n'\\m+\lfloor z_{n,n'}/(2\pi)\rfloor=\ell}} \frac{c_{\ell,m,n,n'}}{\langle z_{n,n'}\rangle^{s}}e_n(x_1,x_2)e^{2\pi i n'x_3}e^{-2\pi i\lfloor z_{n,n'}/(2\pi)\rfloor t}e^{2\pi i\ell t}\bigg\rVert_{L_x^pL_t^q(I)}\\
&\hspace{0.2in}\lesssim \bigg\lVert \sum_{m,n,n'} \frac{c_{\ell,m,n,n'}}{\langle z_{n,n'}\rangle^{s}}e_n(x_1,x_2)e^{2\pi i n'x_3}\bigg\rVert_{H_x^{\sigma}},\label{ref50}
\end{align}
for all $\sigma>0$ with $\sigma>\max\{1-\frac{1}{q},\frac{3}{2}-\frac{1}{q}-\frac{2}{p}\}$.

The right-hand side of ($\ref{ref50}$) is equal to 
\begin{align*}
\bigg(\sum_{\substack{m,n,n'\\m+\lfloor z_{n,n'}/(2\pi)\rfloor=\ell}} \frac{|c_{\ell,m,n,n'}|^2}{\langle z_{n,n'}\rangle^{2(s-\sigma)}}\bigg)^{1/2},
\end{align*}
so that choosing $\sigma>0$ such that 
\begin{align*}
\max\bigg\{1-\frac{1}{q},\frac{3}{2}-\frac{1}{q}-\frac{2}{p}\bigg\}<\sigma\leq s
\end{align*}
we obtain
\begin{align*}
\lVert f\rVert_{L_x^pL_t^q(I)}\leq C\sup_{\ell\in \mathbb{Z}}\bigg(\sum_{\substack{m,n,n'\\m+\lfloor z_{n,n'}/(2\pi)\rfloor=\ell}} |c_{\ell,m,n,n'}|^2\bigg)^{1/2}.
\end{align*}

The desired inequality (\ref{ref47}) now follows by observing that, for each $\ell\in\mathbb{Z}$, the inequality $$\sum_{\substack{m,n,n'\\m+\lfloor z_{n,n'}/(2\pi)\rfloor=\ell}} |c_{\ell,m,n,n'}|^2\leq 1$$ holds by construction.
\end{proof}

\section{Local well-posedness: contraction in $X^{s,b}$ spaces}
\label{ref51}

In this section we give the proof of Theorem $\ref{ref10}$, the local well-posedness result.  As we described in the introduction, the proof is based on a contraction-mapping argument in the spaces $X^{s,b}$, using the results of Sections $2$, $3$ and $4$.  To simplify notation, in this section and in the remainder of the paper we shall make use of the abbreviation
\begin{align}
\mathcal{F}(z)=|\iRe(z)|^\gamma(\iRe(z)), \,\, z\in\mathbb{C},\label{ref52}
\end{align}
when convenient.

\begin{proof}[Proof of Theorem $\ref{ref10}$]
Fix $0<s\leq 1$ and $b>\frac{1}{2}$.  We proceed by a fixed point argument.  In particular, for each $A\geq 1$ we shall identify positive constants $T$ and $R$ (each depending on $A$) and construct a set $\Sigma_A$ satisfying the conditions stated above such that for all $\omega\in \Omega\setminus\Sigma_A$ the map $\Phi_\omega:B_R\rightarrow B_R$ (where $B_R:=\{v\in X^{s,b}([0,T]):\lVert v\rVert_{X^{s,b}}\leq R\}$) given by 
\begin{align*}
&[\Phi_{\omega}(v)](t,x):=\\
&\hspace{0.4in}-i\lambda\int_0^t S(t-t')(\sqrt{-\Delta})^{-1}\Big[|\iRe(S(t')\phi_{\omega}+v(t'))|^\gamma \iRe(S(t')\phi_\omega+v(t'))\Big]dt'
\end{align*}
is a contraction.  In what follows, we will use the abbreviated notation $\Phi=\Phi_\omega$ and $\phi=\phi_\omega$ when there is no potential for ambiguity.

Let $A\geq 1$ be given.  Fix $R>0$ and $0<T<1$ to be determined later in the argument, and set $I=[0,T]$.  Let $v\in X^{s,b}(I)$ be given with $\lVert v\rVert_{X^{s,b}}\leq R$.  Now, fix $p\geq 1$ and $\epsilon>0$ satisfying the conditions of Lemma $\ref{ref44}$.  Invoking that lemma, we get
\begin{align}
\nonumber \lVert \Phi(v)\rVert_{X^{s,b}(I)}&\lesssim \lVert S(t)\phi+v(t)\rVert_{L_x^{p(\gamma+1)}(B_2\times \mathbb{T};L_t^{2(\gamma+1)}(I))}^{\gamma+1}\\
\nonumber &\lesssim \lVert S(t)\phi\rVert_{L_x^{p(\gamma+1)}(B_2\times \mathbb{T};L_t^{2(\gamma+1)}(I))}^{\gamma+1}\\
&\hspace{1.2in}+\lVert v\rVert_{L_x^{p(\gamma+1)}(B_2\times \mathbb{T};L_t^{2(\gamma+1)}(I))}^{\gamma+1}\label{ref53}
\end{align}
under the condition $b<\frac{1}{2}+\epsilon$.

To estimate the linear evolution $S(t)\phi$, we appeal to the probabilistic considerations of Section $3$.  In particular, if $p$ is chosen such that 
\begin{align}
2\leq p(\gamma+1)<\rho_*(\alpha)\label{ref54}
\end{align}
(where $\rho_*(\alpha)$ is as in ($\ref{ref30}$)), then an application of Lemma \ref{ref31} gives the bound
\begin{align}
\lVert S(t)\phi\rVert_{L_x^{p(\gamma+1)}L_t^{2(\gamma+1)}(I)}\leq A\label{ref55}
\end{align}
for all $\omega\in \Omega\setminus \Sigma_A$, with
\begin{align*}
\Sigma_A:=\{\omega\in \Omega:\lVert S(t)\phi_\omega\rVert_{L_x^{p(\gamma+1)}L_t^{2(\gamma+1)}(I)}>A\},
\end{align*}
\begin{align*}
\mu(\Sigma_A)\lesssim \exp(-cA^2).
\end{align*}

On the other hand, to estimate the $L_x^{p(\gamma+1)}L_t^{2(\gamma+1)}(I)$ norm of $v$, we note that Lemma $\ref{ref46}$ gives
\begin{align}
\lVert v\rVert_{L_x^{p(\gamma+1)}L_t^{2(\gamma+1)}(I)}&\leq CT^{1/\epsilon}\lVert v\rVert_{X^{s,b}},\label{ref56}
\end{align}
for some $\epsilon>0$, provided that the parameters $s$ and $p$ satisfy the condition
\begin{align}
s>\max\bigg\{1-\frac{1}{2(\gamma+1)},\frac{3}{2}-\frac{1}{2(\gamma+1)}-\frac{2}{p(\gamma+1)}\bigg\}.\label{ref57}
\end{align}
Indeed, assuming ($\ref{ref57}$), the bound ($\ref{ref56}$) is obtained by choosing $q\geq 2(\gamma+1)$ such that
\begin{align}
s>\max\bigg\{1-\frac{1}{q},\frac{3}{2}-\frac{1}{q}-\frac{2}{p(\gamma+1)}\bigg\},\label{ref58}
\end{align}
and using the H\"older inequality followed by Lemma $\ref{ref46}$ with the norm $L_x^{p(\gamma+1)}L_t^q(I)$ to obtain
\begin{align*}
\lVert v\rVert_{L_x^{p(\gamma+1)}L_t^{2(\gamma+1)}(I)}&\lesssim T^{\frac{1}{2(\gamma+1)}-\frac{1}{q}}\lVert v\rVert_{L_x^{p(\gamma+1)}L_t^q(I)}\\
&\lesssim T^{\frac{1}{2(\gamma+1)}-\frac{1}{q}}\lVert v\rVert_{X^{s,b}},
\end{align*}
Taking $q$ as large as possible while satisfying ($\ref{ref58}$), we obtain the desired estimate ($\ref{ref56}$) for suitable choice of $\epsilon>0$.

Combining ($\ref{ref53}$) with ($\ref{ref55}$) and ($\ref{ref56}$), we have
\begin{align*}
\lVert \Phi_{\omega}(v)\rVert_{X^{s,b}(I)}\leq C_1A^{\gamma+1}+C_2(T^{1/\epsilon}R)^{\gamma+1}.
\end{align*}
for all $\omega\in \Omega\setminus \Sigma_A$, provided that there exist $s>0$ and $p\geq 1$ satisfying the condition of Lemma $\ref{ref44}$ together with ($\ref{ref54}$) and ($\ref{ref57}$).  Note that the condition ($\ref{thm1cond1}$)--($\ref{thm1cond2}$) implies that such values of $s$ and $p$ exist (see Appendix $\ref{ref91}$ for further comments on this point).  
Choosing $R=2\max\{C_1,C_2\}A^{\gamma+1}$ and 
\begin{align*}
T<\Big(\frac{1}{2C_2R^\gamma}\Big)^\frac{\epsilon}{\gamma+1}
\end{align*}
now gives
\begin{align*}
\lVert \Phi_{\omega}(v)\rVert_{X^{s,b}(I)}\leq R
\end{align*}
for all $\omega\in \Omega\setminus \Sigma_A$.  This shows that for all such $\omega$, the map $\Phi_\omega$ carries the set $B_R$ to itself.

To conclude the desired existence and uniqueness result, it remains to show that $\Phi_\omega$ is a contraction on $B_R$ whenever $\omega$ is not in $\Sigma_A$.  This follows almost immediately from the above arguments and the elementary inequality
\begin{align}
|\mathcal{F}(a)-\mathcal{F}(b)|\lesssim |a-b|(|a|^\gamma+|b|^\gamma).\label{ref60}
\end{align}
We include the details for the convenience of the reader: let $v,w\in B_R$ be given and suppose that $s$ and $p$ are as above.  Arguing as before, we obtain
\begin{align}
\nonumber &\lVert \Phi_\omega(v)-\Phi_\omega(w)\rVert_{X^{s,b}}\\
\nonumber &\hspace{0.4in}\lesssim \lVert \mathcal{F}(S(t)\phi+v(t))-\mathcal{F}(S(t)\phi+w(t))\rVert_{L_x^{p}L_t^{2}(I)}\\
\nonumber &\hspace{0.4in}\lesssim \lVert |v(t)-w(t)|(|S(t)\phi+v(t)|^\gamma+|S(t)\phi+w(t)|^\gamma)\rVert_{L_x^{p}L_t^2(I)}\\
\nonumber &\hspace{0.4in}\leq \lVert v-w\rVert_{L_x^{p(\gamma+1)}L_t^{2(\gamma+1)}(I)}\\
&\hspace{0.8in} \cdot \lVert |S(t)\phi+v(t)|^\gamma+|S(t)\phi+w(t)|^\gamma\rVert_{L_x^{\frac{p(\gamma+1)}{\gamma}}L_t^{\frac{2(\gamma+1)}{\gamma}}(I)},\label{ref61}
\end{align}
where the third line follows from ($\ref{ref60}$) and the fourth line results from an application of H\"older's inequality.  Then, using H\"older's inequality (in time) and Lemma $\ref{ref46}$ to estimate the norm of $v-w$, we get
\begin{align*}
(\ref{ref61})&\lesssim T^{1/\epsilon}\lVert v-w\rVert_{X^{s,b}}\\
&\hspace{0.4in}\cdot (\lVert S(t)\phi\rVert_{L_x^{p(\gamma+1)}L_t^{2(\gamma+1)}(I)}^\gamma+\lVert v\rVert_{L_x^{p(\gamma+1)}L_t^{2(\gamma+1)}(I)}^\gamma+\lVert w\rVert_{L_x^{p(\gamma+1)}L_t^{2(\gamma+1)}(I)}^\gamma)\\
&\lesssim T^{(\gamma+1)/\epsilon}\lVert v-w\rVert_{X^{s,b}}(\lVert S(t)\phi\rVert_{L_x^{p(\gamma+1)}L_t^{q}(I)}^\gamma+\lVert v\rVert_{X^{s,b}}^\gamma+\lVert w\rVert_{X^{s,b}}^\gamma) 
\end{align*}
with $q>2(\gamma+1)$ chosen as before (see ($\ref{ref58}$) above).

Recalling that $v$ and $w$ each belong to $B_R$, this gives
\begin{align*}
\lVert \Phi_\omega(v)-\Phi_\omega(w)\rVert_{X^{s,b}}\leq CT^{(\gamma+1)/\epsilon}(A^\gamma+2R^\gamma)\lVert v-w\rVert_{X^{s,b}}
\end{align*}
for all $\omega\in \Omega\setminus \Sigma_A$.  Recalling the choice of $R$ above as a multiple of a power of $A$, and choosing 
\begin{align*}
T<\min\bigg\{\Big(\frac{1}{2C_2R^\gamma}\Big)^{\frac{\epsilon}{\gamma+1}},\Big(\frac{1}{2C(A^\gamma+2R^{\gamma})}\Big)^{\frac{\epsilon}{\gamma+1}}\bigg\},
\end{align*}
we obtain the desired contraction property for all such $\omega$.  

It follows immediately that there is a unique $v\in B_R$ such that $\Phi(v)=v$.  We conclude the proof of the proposition by noting that the function $u(t)=S(t)\phi_\omega+v(t)$ is the desired solution of the initial value problem.
\end{proof}

\begin{remark}
Under the hypotheses of Theorem $\ref{ref10}$, the constructed solution satisfies the bound
\begin{align*}
\lVert u\rVert_{L_t^\infty(I;H_x^{s'})}\leq CA^{\gamma+2},\quad 0<s'<\min\{s,\alpha-1\}.
\end{align*}
with $s$ as in the statement of the theorem.  Indeed, this is an immediate consequence of ($\ref{ref12}$), when combined with the embedding $X^{s,b}\hookrightarrow C_t(I;H_x^{s})$ (see ($\ref{ref19}$) in Section $2$) and the conservation of $H^s$ norms for the linear propagator.
\conclremark\end{remark}

\appendix 

\section{The choice of $s$ and $p$ in the proof of Theorem $\ref{ref10}$}
\label{ref91}

In this appendix, we show how the condition ($\ref{thm1cond1}$)--($\ref{thm1cond2}$) allows for the choice of the parameters $s>0$ and $p\geq 2$ used in the proof of Theorem $\ref{ref10}$.  Recall that the initial value problem (NLW) under consideration in this paper is equipped with nonlinearity $|u|^\gamma u$; for the reformulated equation ($\ref{ref5}$), the nonlinearity takes the form $\mathcal{F}(u)=|\iRe(u)|^\gamma\iRe(u)$.

We shall choose the values of $s$ and $p$ to satisfy the condition of the nonlinear $X^{s,b}$ estimate in Lemma $\ref{ref44}$ together with the conditions ($\ref{ref54}$) and ($\ref{ref57}$), which correspond respectively to applications of the probabilistic estimate of Lemma $\ref{ref31}$ and the $X^{s,b}(I)\hookrightarrow L_x^pL_t^q(I)$ embedding of Lemma $\ref{ref46}$.  We first note that the condition $$s>1-\frac{1}{2(\gamma+1)}$$ imposed as part of ($\ref{ref57}$) implies that for $\gamma>0$, we must have $s>1/2$.

Let $0<\gamma<2$ and $\alpha>0$ be given.  If $\alpha\geq 1/2$, the condition ($\ref{ref54}$) reads only $p\geq 2/(\gamma+1)$ (which is automatically satisfied for $p\geq 2$).  Then, choosing $s=1$, the condition $s>1-\frac{1}{2(\gamma+1)}$ in ($\ref{ref57}$) is automatically satisfied.  The hypothesis of Lemma $\ref{ref44}$ in this case becomes $p\geq 2$.  It remains to determine when we can satisfy the remaining condition in ($\ref{ref57}$), which reads $$1>3/2-1/(2(\gamma+1))-2/(p(\gamma+1)),$$ and which is satisfied whenever $p < 4/\gamma$.  Collecting these criteria, we must choose $p\in [2,4/\gamma)$ which is possible for $\gamma<2$.  Direct examination of the criteria that arise when $1/2\leq s<1$ shows that reducing the value of $s$ does not lead to a larger admissible range of $\gamma$ in this case.\footnote{Indeed, in the $\alpha\geq 1/2$ case, the subcubic problem is no longer supercritical with respect to scaling, and one can appeal to a stronger class of deterministic fixed-point estimates; since our focus in this paper is on the probabilistic theory, we do not pursue this issue further.}

We now examine the regime $\alpha<1/2$, where ($\ref{ref54}$) becomes $\frac{2}{\gamma+1}\leq p<\frac{4}{(\gamma+1)(1-2\alpha)}$.  Here, we first look at situations where $s=1$.  In this case, the hypothesis of Lemma $\ref{ref44}$ is again $p\geq 2$, and the first condition in ($\ref{ref57}$) is again automatically satisfied.  The second condition in ($\ref{ref57}$) again becomes $p<4/\gamma$, so that the conditions on $p$ in this regime are $$2\leq p\leq \min\bigg\{\frac{4}{\gamma},\frac{4}{(\gamma+1)(1-2\alpha)}\bigg\},$$ which is a nonempty interval when $$0<\gamma<\min\{2,\frac{2\alpha+1}{1-2\alpha}\}.$$ Fixing $\gamma\in (0,2)$, it follows that choice of the pair $(s,p)$ is possible (with $s=1$) when $\alpha>\frac{\gamma-1}{2(\gamma+1)}$.

To explore handling smaller values of $\alpha$ in this regime, we recall from above that ($\ref{ref57}$) implies $s>1/2$, and consider choices of $s$ in the range $1/2\leq s<1$, for which the hypothesis of Lemma $\ref{ref44}$ becomes $p>\frac{6}{5-2s}$ (i.e. $s<\frac{5}{2}-\frac{3}{p}$).  We consider two possibilities, (i) when $\frac{1}{2}-\frac{2}{p(\gamma+1)}\geq 0$, and (ii) when $\frac{1}{2}-\frac{2}{p(\gamma+1)}<0$, corresponding to situations where the condition ($\ref{ref57}$) becomes $$s>\frac{3}{2}-\frac{1}{2(\gamma+1)}-\frac{2}{p(\gamma+1)}=\Big( 1 - \frac{1}{2(\gamma+1)} \Big) + \Big( \frac{1}{2}-\frac{2}{p(\gamma+1)} \Big)$$ (when (i) holds) or 
\begin{align}
s>1-\frac{1}{2(\gamma+1)}\label{ref57a}
\end{align}
(when (ii) holds).  

If (i) holds, i.e.  $p\geq \frac{4}{\gamma+1}$, the range of admissible values for $s$ becomes $\max\bigg\{\frac{1}{2}, \frac{3}{2}-\frac{1}{2(\gamma+1)}-\frac{2}{p(\gamma+1)}\bigg\}<s<\min\bigg\{1,\frac{5}{2}-\frac{3}{p}\bigg\}$, which is nonempty provided that $$p>\max\bigg\{\frac{3}{2},\frac{2(3\gamma+1)}{2\gamma+3}\bigg\}\quad\textrm{and}\quad p<\frac{4}{\gamma}.$$ Recalling the additional restriction $p<\frac{4}{(\gamma+1)(1-2\alpha)}$, the conditions on $p$ read $$\max\bigg\{\frac{4}{\gamma+1},\frac{3}{2},\frac{2(3\gamma+1)}{2\gamma+3}\bigg\}<p<\min\bigg\{\frac{4}{\gamma},\frac{4}{(\gamma+1)(1-2\alpha)}\bigg\},$$ which is nonempty provided $$\gamma<\min\bigg\{2,\frac{5+6\alpha}{3(1-2\alpha)},\frac{4\alpha+\sqrt{4\alpha^2-24\alpha+15}}{3(1-2\alpha)}\bigg\}.$$  Note that all such situations are already handled by the $s=1$ case.

On the other hand, when (ii) holds we do obtain a result expanding the range of $\alpha$ whenever $0<\gamma<\sqrt{\frac{5}{3}}$.  Indeed, in case (ii), we have $p<\frac{4}{\gamma+1}$, and ($\ref{ref57}$) becomes ($\ref{ref57a}$).  The range of admissible $s$ then becomes $1-\frac{1}{2(\gamma+1)}<s<\min\bigg\{1,\frac{5}{2}-\frac{3}{p}\bigg\}$, which is nonempty provided $p>\frac{6(\gamma+1)}{3\gamma+4}$.  Recalling from ($\ref{ref54}$) the additional restriction $\frac{2}{\gamma+1}\leq p<\frac{4}{(\gamma+1)(1-2\alpha)}$, the conditions on $p$ in this case read $$\max\bigg\{\frac{2}{\gamma+1}, \frac{6(\gamma+1)}{3\gamma+4} \bigg\} < p < \min\bigg\{ \frac{4}{\gamma+1}, \frac{4}{(\gamma+1)(1-2\alpha)} \bigg\}$$ which is a nonempty interval whenever $$ 0<\gamma<\sqrt{5/3}.$$ It follows that choice of $(s,p)$ satisfying the deesired conditions is possible whenever $\gamma$ and $\alpha$ satisfy ($\ref{thm1cond1}$)--($\ref{thm1cond2}$), as desired.

\section*{Acknowledgments}

The author would like to express sincere thanks to the referee for many valuable comments and suggestions.


\begin{thebibliography}{9}
\bibitem{BenyiOhPocovnicu2019} \'A. B\'enyi, T. Oh, and O. Pocovnicu. On the probabilistic Cauchy theory for nonlinear dispersive PDEs. Landscapes of time-frequency analysis, 1–32, Appl. Numer. Harmon. Anal., Birkh\"auser/Springer, Cham (2019).
\bibitem{B1} J. Bourgain. Periodic nonlinear Schr\"odinger equation and invariant measures.  Comm. Math. Phys. 166 (1994), 1--24.
\bibitem{B2} J. Bourgain. Invariant measures for the 2D-Defocusing Nonlinear Schr\"odinger Equation.  Comm. Math. Phys. 176 (1996), 421--445.
\bibitem{B3} J. Bourgain. Invariant measures for the Gross-Piatevskii equation.  J. Math. Pures Appl. (9) 76 (1997), no. 8, 649--702.
\bibitem{BB1} J. Bourgain and A. Bulut.  Gibbs measure evolution in radial nonlinear wave and Schrödinger equations on the ball.  C. R. Mathematique 350 (2012) 11-12, p. 571--575.
\bibitem{BB2} J. Bourgain and A. Bulut.  Almost sure global well posedness for the radial nonlinear Schrodinger equation on the unit ball I: the 2D case.  Ann. Inst. H. Poincare C Anal. Non Lin\'eaire 31 (2014), no. 6, 1267--1288.
\bibitem{BB3} J. Bourgain and A. Bulut.  Almost sure global well posedness for the radial nonlinear Schrodinger equation on the unit ball II: the 3D case.  J. Eur. Math. Soc. 16 (2014), no. 6, 1289--1325.
\bibitem{BB4} J. Bourgain and A. Bulut.  Invariant Gibbs measure evolution for the radial nonlinear wave equation on the 3d ball.  J. Funct. Anal. 266 (2014), no. 4, 2319--2340.
\bibitem{Bringmann} B. Bringmann.  Invariant Gibbs measures for the three-dimensional wave equation with a Hartree nonlinearity II: Dynamics.  Preprint (2022), arXiv:2009.04616
\bibitem{BringmannDengNahmodYue} B. Bringmann, Y. Deng, A. Nahmod, and H. Yue.  Invariant Gibbs measures for the three dimensional cubic nonlinear wave equation.  Preprint (2022), arXiv:2205.03893.
\bibitem{B-icm} N. Burq.  Random data Cauchy theory for dispersive partial differential equations.  Proceedings of the International Congress of Mathematicians.  Volume III, 1862--1883.  Hindustan Book Agency, New Delhi, 2010.
\bibitem{BLP} N. Burq, G. Lebeau and F. Planchon.  Global existence for energy critical waves in 3-D domains.  J. Amer. Math. Soc. 21 (2008), no. 3, 831--845.
\bibitem{BT1} N. Burq and N. Tzvektov.  Random data Cauchy theory for supercritical wave equations. I. Local theory.  Invent. Math. 173 (2008), no. 3, 449--475.
\bibitem{BT2} N. Burq and N. Tzvektov.  Random data Cauchy theory for supercritical wave equations. II. A global existence result.  Invent. Math. 173 (2008), no. 3, 447--496.
\bibitem{BT-pwp} N. Burq and N. Tzvektov.  Probabilistic well-posedness for the cubic wave equation. (2011) J. Eur. Math. Soc. 16 (2014), no. 1, 1--30.
\bibitem{CollianderOh} J. Colliander and T. Oh.  Almost sure well-posedness of the cubic nonlinear Schr\"odinger equation below $L^2(\mathbb{T})$.  Duke Math. J. 161 (2012), no. 3, 367--544.
\bibitem{LRS} J. Lebowitz, H. Rose and E. Speer.  Statistical mechanics of the nonlinear Schr\"odinger equation.  J. Statis. Phys. 50 (1988), no. 3--4, 657--687.
\bibitem{LM} J. L\"uhrmann and D. Mendelson.  Random data Cauchy theory for nonlinear wave equations of power-type on $\mathbb{R}^3$.  Comm. Par. Diff. Eq. 39 (2014), no. 12, 2262--2283.
\bibitem{LM2} J. L\"uhrmann and D. Mendelson.  On the almost sure global well-posedness of energy sub-critical nonlinear wave equations on $\mathbb{R}^3$.  New York J. Math. 22 (2016), 209--227.
\bibitem{NPS} A. Nahmod, N. Pavlovic and G. Staffilani.  Almost sure existence of global weak solutions for supercritical Navier-Stokes equations.  SIAM J. Math. Anal. 45 (2013), no. 6, 3431--3452.
\bibitem{OP} T. Oh and O. Pocovnicu.  Probabilistic global well-posedness of the energy-critical defocusing quintic nonlinear wave equation on $\mathbb{R}^3$. J. Math. Pures Appl. 105 (2016), 342--366.
\bibitem{OPN} T. Oh, O. Pocovnicu, and N. Tzvetkov.  Probabilistic local Cauchy theory of the cubic nonlinear wave equation in negative Sobolev spaces.  Ann. Inst. Fourier (Grenoble) 72 (2022), no.2, 771--830.
\bibitem{OhRobertTzvetkov} T. Oh, T. Robert, and N. Tzvetkov.  Stochastic nonlinear wave dynamics on compact surfaces.  Ann. H. Lebesgue 6 (2023), 161--223.
\bibitem{P} O. Pocovnicu.  Almost sure global well-posedness for the energy-critical defocusing nonlinear wave equation on $\mathbb{R}^d$, $d=4$ and $5$.  J. Eur. Math. Soc. 19 (2017), no. 8, 2521--2575.
\bibitem{Rudnik} Z. Rundik.  Personal communication.
\bibitem{SS} J. Shatah and M. Struwe.  Geometric wave equations.  Courant Lecture Notes in Mathematics, 2.  New York University, Courant Institute of Mathematical Sciences, New York; American Mathematical Society, Providence, RI. 1998.
\bibitem{SX} C. Sun and B. Xia.  Probabilistic well-posedness for supercritical wave equations with periodic boundary condition on dimension three. Illinois J. Math. 60 (2016), no. 2, 481--503.
\bibitem{T-disc} N. Tzvetkov.  Invariant measures for the nonlinear Schr\"odinger equation on the disc.  Dyn. Partial Diff. Eq. 3 (2006), no. 2, 111--160.
\bibitem{T-inv} N. Tzvetkov.  Invariant measures for the defocusing nonlinear Schr\"odinger equation.  Ann. Inst. Fourier 58 (2008), no. 7, 2543--2604.
\end{thebibliography}
\end{document}